\newcommand{\leb}{\operatorname{Leb}}
\newcommand{\dist}{\operatorname{dist}}
\newcommand{\diam}{\operatorname{diam}}
\begin{document}
\newcommand{\mcup}{\mbox{$\bigcup$}}
\newcommand{\mcap}{\mbox{$\bigcap$}}

\def \RR {{\mathbb R}}
\def \ZZ {{\mathbb Z}}
\def \NN {{\mathbb N}}
\def \PP {{\mathbb P}}
\def \TT {{\mathbb T}}
\def \II {{\mathbb I}}
\def \JJ {{\mathbb J}}

\def \vare {\varepsilon }

 \def \cf {\mathcal{F}}
 \def \cm {\mathcal{M}}
 \def \cn {\mathcal{N}}
 \def \cq {\mathcal{Q}}
 \def \cp {\mathcal{P}}
 \def \cb {\mathcal{B}}
 \def \cc {\mathcal{C}}
 \def \cs {\mathcal{S}}
 \def \bc {\mathcal{B}}
 \def \hc {\mathcal{C}}

\newcommand{\dem}{\begin{proof}}
\newcommand{\cqd}{\end{proof}}

\newcommand{\qand}{\quad\text{and}\quad}

\newtheorem{theorem}{Theorem}
\newtheorem{corollary}{Corollary}

\newtheorem*{Maintheorem}{Main Theorem}
\newtheorem*{Theorem*}{Theorem}

\newtheorem{maintheorem}{Theorem}
\renewcommand{\themaintheorem}{\Alph{maintheorem}}
\newcommand{\cmt}{\begin{maintheorem}}
\newcommand{\fmt}{\end{maintheorem}}

\newtheorem{maincorollary}[maintheorem]{Corollary}
\renewcommand{\themaintheorem}{\Alph{maintheorem}}
\newcommand{\cmc}{\begin{maincorollary}}
\newcommand{\fmc}{\end{maincorollary}}

\newtheorem{T}{Theorem}[section]
\newcommand{\ct}{\begin{T}}
\newcommand{\ft}{\end{T}}

\newtheorem{Corollary}[T]{Corollary}
\newcommand{\cco}{\begin{Corollary}}
\newcommand{\fco}{\end{Corollary}}

\newtheorem{Proposition}[T]{Proposition}
\newcommand{\cpr}{\begin{Proposition}}
\newcommand{\fpr}{\end{Proposition}}

\newtheorem{Lemma}[T]{Lemma}
\newcommand{\cle}{\begin{Lemma}}
\newcommand{\fle}{\end{Lemma}}

\newtheorem{Sublemma}[T]{Sublemma}
\newcommand{\csle}{\begin{Sublemma}}
\newcommand{\fsle}{\end{Sublemma}}

\newtheorem{Question}{Question}

\newtheorem*{Conjecture}{Conjecture}

\theoremstyle{definition}

\newtheorem{Remark}[T]{Remark}
\newcommand{\cre}{\begin{Remark}}
\newcommand{\fre}{\end{Remark}}

\newtheorem{Definition}[T]{Definition}
\newcommand{\cd}{\begin{Definition}}
\newcommand{\fd}{\end{Definition}}

\allowdisplaybreaks

\title[GMY structures with (stretched) exponential tail]{Gibbs-Markov-Young structures with (stretched) exponential tail for partially hyperbolic attractors}

\author{Jos\'e F. Alves}
\address{Jos\'e F. Alves\\ Departamento de Matem\'atica, Faculdade  de Ci\^encias da Universidade do Porto\\
Rua do Campo Alegre 687, 4169-007 Porto, Portugal}
\email{jfalves@fc.up.pt} \urladdr{http://www.fc.up.pt/cmup/jfalves}

\author{Xin Li}
\address{Xin Li\\ Department of Mathematics, Soochow University, Suzhou 215006, Jiangsu, PR China;
and 
Departamento de Matem\'atica, Faculdade de Ci\^encias da Universidade do Porto\\
Rua do Campo Alegre 687, 4169-007 Porto, Portugal}
\email{lxinporto@gmail.com}

\date{\today}

\subjclass[2000]{37A05, 37C40, 37D25}

\keywords{Partially hyperbolic attractor, Gibbs-Markov-Young
structure, recurrence times, decay of correlations, large deviations}

\thanks{JFA was partially supported by Funda\c c\~ao Calouste Gulbenkian, by CMUP, by the European Regional Development Fund through the Programme COMPETE and by  FCT under the projects PTDC/MAT/099493/2008 and PEst-C/MAT/UI0144/2011. XL was supported by FCT}

\begin{abstract} In this work we extend the results obtained by  Gou\"{e}zel in \cite{G} to partially hyperbolic attractors. 
We study a forward invariant set $K$ on a Riemannian manifold
$M$ whose tangent space splits as dominated decomposition $T_K M=E^{cu}\oplus E^{s}$, for
which the center-unstable direction $E^{cu}$ is non-uniformly expanding
on some local unstable disk.
 We prove that the (stretched) exponential decay of recurrence times for an induced scheme can be deduced under the assumption of  (stretched) exponential decay of the time that typical points need to
achieve some uniform expanding in the center-unstable
direction.  As an application of our results we obtain  exponential decay of correlations and exponential large deviations for a class of partially hyperbolic diffeomorphisms  considered in~\cite{ABV}.

\end{abstract}

\maketitle

\setcounter{tocdepth}{2}

\tableofcontents


\section{Introduction}

 In the late 60's and beginning of 70's, Sinai,
Ruelle and Bowen brought Markov partitions and symbolic dynamics
into the theory of uniformly hyperbolic dynamics to prove the existence of the so-called \emph{Sinai-Ruelle-Bowen (SRB) measures} for these  systems; see \cite{Sin68,Bow70,Rue79}. According to Ruelle \cite[Preface]{Bow75}, ``this allowed the powerful techniques and results of statistical mechanics to be applied into smooth dynamics''. 

To study systems beyond those
uniformly hyperbolic, in \cite{Y1,Y2} Young  used some type of Markov partitions with infinitely many symbols to build towers for systems with nonuniform hyperbolic behavior, including Axiom A attractors, piecewise hyperbolic maps, billiards with convex scatterers, logistic maps, intermittent maps and H\'{e}non-type
attractors. Using  towers, Young  studied some statistical properties of these nonuniformly hyperbolic systems,
including the existence of SRB measures, exponential decay of correlations and the validity of the Central Limit Theorem for the SRB measure.  Roughly speaking, Young towers are characterized by some region of the phase space partitioned into an at most countable number of subsets with associated \emph{recurrence times}. Young called it a \emph{horseshoe with infinitely many branches}. These structures have some properties which address to Gibbs states and for that reason they are nowadays commonly  referred to as \emph{Gibbs-Markov-Young (GMY) structures}.

 In \cite{BV}, Bonatti and Viana considered partially hyperbolic attractors with mostly contracting central direction, meaning that the tangent bundle splits as $E^{cs}\oplus E^u$, with the  $E^u$ direction being uniformly expanding and the $E^{cs} $ direction having negative Lyapunov exponents.
They gave sufficient conditions for  the existence of SRB measures under those conditions. In \cite{Cas}, Castro showed the existence of GMY structures for such systems, thus obtaining statistical properties like exponential decay of correlations and the validity of the Central Limit Theorem. The Central Limit
Theorem   had also been obtained by Dolgopyat in \cite{D}.

However, as most of the richness of the dynamics in partially hyperbolic attractors appears in the  unstable direction,  the case $E^{cu}\oplus E^s$ (now with the stable direction being uniform and the unstable one being nonuniform)  comprises more difficulties than the case $E^{cs}\oplus E^u$. 
The existence of SRB's for some classes of non-uniformly hyperbolic systems  has been proved in~\cite{ABV} by Alves, Bonatti and Viana, both for non-uniformly expanding maps, in the non-invertible case, and for partially hyperbolic attractors of the type $E^{cu}\oplus E^s$, in the invertible case.
 In  the non-invertible case,  Alves, Luzzatto and Pinheiro  proved in \cite{ALP} the existence of GMY structures  of non-uniformly expanding maps. 
 Their approach,  originated from \cite{Y1} for Axiom A attractors, has shown to be not efficient enough to estimate the tail of recurrence times for non-uniformly hyperbolic systems with exponential or stretched exponential tail of hyperbolic  times. This is due to the fact that at each step of their algorithmic construction just a definite fraction of  hyperbolic times is used to construct new elements in the partition. In the invertible case, using arguments similar to those in~\cite{ALP},
Alves and Pinheiro in~\cite{AP3}  obtained GMY structures for partially hyperbolic attractors. Again, they only managed to prove the polynomial case: if the lack of expansion of the system at time $n$  in the center-unstable direction is polynomially small,  then the system has some GMY structure with polynomial decay of recurrence times.

 Gou\"{e}zel
developed a new construction in \cite{G} with more efficient control for the tail of the recurrence times in the non-invertible setting. As a starting point, Gou\"{e}zel used the fact that the attractor could be partitioned into a finite number of sets with small
size.
That gave rise to more precise
estimates than those  in \cite{ALP}, yielding also the (streched) exponential case for non-uniformly expanding maps. However,  for important combinatorial reasons, Gou\"{e}zel
 strategy could not be generalized  to the
partially hyperbolic setting $E^{cu}\oplus E^s$, in particular because the attractor is typically made of unstable leaves, which are not bounded in their intrinsic distance.
%
Partly inspired by \cite{G,P}, Alves, Dias and Luzzatto gave in \cite{ADL}  an improved \emph{local} GMY structure, more efficient than \cite{ALP} in the use of hyperbolic times, which made it possible to prove the integrability of recurrence times under very general conditions.

%

The main goal of this work is to fill a gap in the theory of partially hyperbolic diffeomorphisms of the type $E^{cu}\oplus E^s$, where, after \cite{AP3},  GMY structures are only known  with polynomial tail of recurrence times.
From these structures we get (stretched) exponential
Decay of Correlations and exponential Large Deviations for the systems under consideration, by  related
results in \cite{Y1,AP2,MN1}.
Our strategy is based in a mixture of techniques from \cite{ADL}
and \cite{G} and we construct a GMY structure by a method similar to \cite{ADL}, where   recurrence times were only proved to be integrable. To improve the efficiency of the algorithm in \cite{AP3}, our
method has a main difference, namely,  we keep track of all points with hyperbolic times at a given iterate and not just of a proportion of those points.



\subsection{Gibbs-Markov-Young structures}
 \label{se.hypstructures}  Here we recall the structures which have been introduced in
 \cite{Y1}.
Let $f:M\to M$ be a $C^{1+}$ diffeomorphism of a finite dimensional
Riemannian manifold~$M$,  \( \leb \) (Lebesgue measure) the
normalized Riemannian volume on the Borel sets of \( M \). Given a submanifold $\gamma\subset M$, we use $\leb_\gamma$
to denote the Lebesgue measure on $\gamma$, induced by the restriction
of the Riemannian structure to~$\gamma$.

\cd An embedded disk $\gamma\subset M$ is called a local {\em unstable manifold}
 if for all $x,y\in\gamma$
\begin{equation*}
  \dist(f^{-n}(x),f^{-n}(y))\to0,\quad\text{as
$n\to\infty$.}
\end{equation*}
Similarly, $\gamma$ is called
a {\em local stable manifold} if for all $x,y\in\gamma$
\begin{equation*}
  \dist(f^n(x),f^n(y))\to0,\quad\text{as $n\to\infty$.}
\end{equation*}
\fd


\begin{Definition} Given $n\ge1$, let $D^u$ be a unit disk in $\mathbb{R}^n$ and let $\text{Emb}^1(D^u,M)$ be the space of $C^1$ embeddings from
$D^u$ into $M$. A \emph{continuous family of $C^1$ unstable manifolds} is a set $\Gamma^u$ of unstable disks $\gamma^u$ satisfying the following properties: there is a
compact set $K^s$ and a
map $\Phi^u\colon K^s\times D^u\to M$ such that
\begin{enumerate}
\item $\gamma^u=\Phi^u(\{x\}\times D^u)$ is a local unstable
manifold;
\item $\Phi^u$ maps $K^s\times D^u$ homeomorphically onto its
image;
\item $x\mapsto \Phi^u\vert(\{x\}\times D^u)$ is a
continuous map from $K^s$ to $\text{Emb}^1(D^u,M)$.
\end{enumerate}
Continuous families of $C^1$ stable manifolds are defined analogously.
\end{Definition}

\begin{Definition}
A subset $\Lambda\subset M$ has a \emph{product
structure} if, for some $n\ge1$, there exist a continuous family of $n$-dimensional unstable manifolds
$\Gamma^u=\{\gamma^u\}$ and a continuous family of $(\dim(M)-n)$-dimensional stable manifolds
$\Gamma^s=\{\gamma^s\}$ such that
\begin{enumerate}
    \item $\Lambda=(\cup\gamma^u)\cap(\cup\gamma^s)$;
    \item each $\gamma^s$ meets each $\gamma^u$ in exactly one point, with the angle of $\gamma^s$ and~$\gamma^u$ uniformly bounded away from zero.
    \end{enumerate}
\end{Definition}

\begin{Definition}
Let $\Lambda\subset M$ have a product structure defined by families $\Gamma^s$ and $\Gamma^u$. A subset $\Lambda_0\subset
\Lambda$ is an {\em $s$-subset} if $\Lambda_0$ has a
hyperbolic product structure defined by families $\Gamma_0^s\subset\Gamma^s$ and $\Gamma_0^u=\Gamma^u$; {\em
$u$-subsets} are defined similarly.
\end{Definition}

For $*=u,s$, given $x\in\Lambda$, let $\gamma^{*}(x)$ denote the element of
$\Gamma^{*}$ containing $x$, and let
$f^*$ denote the restriction of the map $f$ to
$\gamma^*$-disks and $|\det Df^*|$ denote the Jacobian of
$Df^*$.

\begin{Definition}\label{d.product}
 A set $\Lambda$ with a product structure for which properties ($\bf P_0$)-($\bf P_4$) below hold will be called a \emph{Gibbs-Markov-Young (GMY) structure}. From here on we assume that $C>0$, $0<\beta<1$ and $0<\zeta\le 1$ are constants
depending only on~$f$ and~$\Lambda$.
\begin{enumerate}
    \item[\bf\quad(P$\bf_0$)] \emph{Lebesgue detectable}: for every
    $\gamma\in\Gamma^u$, we have
    $\leb_{\gamma}(\Lambda\cap\gamma)>0$;
    \item[\bf\quad
    (P$\bf_1$)] \emph{Markov partition and recurrence times}: there are finitely or countably many pairwise disjoint $s$-subsets $\Lambda_1,\Lambda_2,\dots\subset\Lambda$ such
    that
    \begin{enumerate}
 \item for each $\gamma\in\Gamma^u$, $\leb_{\gamma}\big((\Lambda\setminus\cup\Lambda_i)\cap\gamma\big)=0$;
 \item for each $i\in\NN$ there is integer $R_i\in\NN$ such that $f^{R_i}(\Lambda_i)$ is $u$-subset,
         and for all $x\in \Lambda_i$
        \begin{quote}$
        \displaystyle
         f^{R_i}(\gamma^s(x))\subset \gamma^s(f^{R_i}(x))\qand
         f^{R_i}(\gamma^u(x))\supset \gamma^u(f^{R_i}(x)).
         $
         \end{quote}
         We define the \emph{recurrence time} function $R\colon \cup_{i}\Lambda_{i}\to \NN$ as
         $R\vert_{\Lambda_i}=R_i$. We call $f^{R_i}:\Lambda_i\to\Lambda$ the \emph{induced
         map}.
    \end{enumerate}
\end{enumerate}

 \begin{enumerate}
\item[\bf(P$\bf_2$)] \emph{Uniform contraction on stable leaves}:
   for each $x\in\Lambda$, $y\in\gamma^s(x)$ and $ n\ge
 1$
 \begin{quote}$\displaystyle\dist(f^n(y),f^n(x))\le C\beta^n.$
 \end{quote}

 \end{enumerate}

 \begin{enumerate}
\item[\bf(P$\bf_3$)] \emph{Backward contraction and bounded distortion on unstable leaves}:  for all $x,y\in\Lambda_i$ with $ y\in\gamma^u(x) $, and $ 0\le n<R_i$
\begin{enumerate}
  \item \begin{quote}
$\displaystyle\dist(f^n(y),f^n(x))\le C\beta^{{R_i}-n}
\dist(f^{R_i}(x),f^{R_i}(y));$\end{quote}
  \item \begin{quote}
    $\displaystyle\log\frac{\det D(f^{R_i})^u(x)}{\det D(f^{R_i})^u(y)}\le
    C\dist(f^{R_i}(x),f^{R_i}(y))^{\zeta}.$
    \end{quote}
\end{enumerate}
\end{enumerate}



\begin{enumerate}
\item[\bf(P$\bf_4$)] \emph{Regularity of the foliations}:
\begin{enumerate}
 \item \emph{Convergence of $D(f^i|\gamma^u)$}:
  for all $y\in\gamma^s(x)$ and $ n\ge 0$
 \begin{quote}$\displaystyle
 \log \prod_{i=n}^\infty\frac{\det Df^u(f^i(x))}{\det Df^u(f^i(y))}\le C\beta^{n};
 $\end{quote}
 \item \emph{Absolutely continuity of the stable foliation}:
  given $\gamma,\gamma'\in\Gamma^u$, define the holonomy map
$\phi\colon\gamma\cap\Lambda\to\gamma'\cap\Lambda$  as
$\phi(x)=\gamma^s(x)\cap \gamma$. Then $\phi$ is absolutely
continuous with
        \begin{quote}$
        \displaystyle \frac{d(\phi_* \leb_{\gamma})}{d\leb_{\gamma'}}(x)=
        \prod_{i=0}^\infty\frac{\det Df^u(f^i(x))}{\det
        Df^u(f^i(\phi(x)))}.
        $\end{quote}
\end{enumerate}
\end{enumerate}
The notion of absolute continuity is precisely stated in Section~\ref{sec.regularity}.
Under these conditions  we say that  $F=f^{R}:\Lambda\to\Lambda$ is an \emph{induced GMY map}. 
%
%
\end{Definition}

\subsection{Partially hyperbolic attractors}\label{ss.par}
\label{ss.statement} Here we recall the definition of partially hyperbolic attractors with mostly expanding center-unstable direction and then we state our main theorem, Theorem~\ref{t:Markov towers}. This result extends the polynomial estimates in \cite[Theorem A]{AP3} to the (stretched) exponential case.

Let $f:M\to M$ be a $C^{1+}$ diffeomorphism of a finite dimensional
Riemannian manifold~$M$. We say that $f$ is $C^{1+}$ if $f$ is $C^1$ and $Df$ is H\"older continuous. A set $K\subset M$ is said to be invariant if  $f(K)=K$.
\begin{Definition}
A compact invariant subset $K\subset M$ has a \emph{dominated splitting}, if there exists a continuous $Df$-invariant splitting $T_K M=E^{cs}\oplus E^{cu}$ and $0<\lambda<1$ such that (for some choice of Riemannian metric on~$M$) 
 \begin{equation}\label{domination1}
    \|Df \mid E^{cs}_x\|
\cdot \|Df^{-1} \mid E^{cu}_{f(x)}\| \le\lambda,\quad\text{for all
$x\in K$.}
\end{equation}
We call $E^{cs}$ the {\em center-stable bundle} and $E^{cu}$ the
{\em center-unstable bundle}.
\end{Definition}

\begin{Definition}
A compact invariant set $K\subset M$ is called {\em partially hyperbolic\/}, if it has a
dominated splitting $T_K M=E^{cs}\oplus E^{cu}$ for which $E^{cs}$
is \emph{uniformly contracting} or $E^{cu}$ is \emph{uniformly expanding}, i.e. there is $0<\lambda<1$ such that
(for some choice of a Riemannian metric on $M$)
 $$\|Df \mid E^{cs}_x\|\le \lambda \quad \text{or} \quad
 \|Df^{-1} \mid E^{cu}_{f(x)}\|^{-1}\le \lambda,\quad\text{for all
$x\in K$.}
$$
\end{Definition}
In this work we consider partially hyperbolic sets of the same type
of those considered in~\cite{ABV}, for which the center-stable
direction is uniformly contracting and the central-unstable
direction is non-uniformly expanding. To emphasize that, we shall write $E^s$
instead of $E^{cs}$.

\begin{Definition}Given $b>0$, we say that $f$ is
\emph{non-uniformly expanding} at a point $x\in K$ in the central-unstable direction, if
\begin{equation}\label{Nue}\tag{NUE}
 \limsup_{n\to+\infty} \frac{1}{n}
    \sum_{j=1}^{n} \log \|Df^{-1} \mid E^{cu}_{f^j(x)}\|<-b.
\end{equation}
If $f$ satisfies \eqref{Nue} at some point $x\in K$, then
the \emph{expansion time} function at $x$
\begin{equation}\label{exptime}
    \mathcal E(x) = \min\left\{N\ge 1\colon  \frac{1}{n}
\sum_{i=1}^{n} \log \|Df^{-1}\mid E^{cu}_{f^{i}(x)}\| < -b, \quad
\forall\,n\geq N\right\}
\end{equation}
is defined and finite. We call $\{\mathcal E>n\}$ \emph{the tail of hyperbolic times} (at time~$n$).
\end{Definition}

We remark that if condition \eqref{Nue} holds for every point in a subset with positive Lebesgue measure of a forward invariant set $\tilde K\subset  M$, then $K=\cap_{n\ge 0}f^n(\tilde K)$ contains some local unstable disk $D$ for which condition \eqref{Nue} is satisfied $\leb_D$ almost everywhere; see  \cite[Theorem~A]{AP3}.




\cmt \label{t:Markov towers}
Let \( f: M\to M \) be a \(C^{1+}\) diffeomorphism with $K\subset M$ an invariant transitive partially hyperbolic set. Assume that there
are a local unstable disk $D\subset K$ and constants \(
0<\tau\leq1\), $c>0$ such that $\leb_D\{\mathcal E>n\}=\mathcal
O(e^{-cn^{\tau}}).$ Then there exists $\Lambda\subset~K$ with a GMY structure. Moreover,
 there exists $d>0$ such that
 $\leb_\gamma\{ R> n\} =\mathcal O(e^{-dn^{\tau}})$
 for any $\gamma\in \Gamma^u$.
  \fmt

%

The proof of this result will be given in Section~\ref{se.Gibbs-Markov-Young structure}.
Under the assumptions of Theorem~\ref{t:Markov towers}, the set
$\Lambda$ coincides with $\Gamma^u$, but there are other possibilities, e.g.  in \cite{BY2} $\Lambda$
is a Cantor set for the H\'{e}non attractors.

In Section~\ref{s.appli} we present an open class of diffeomorphisms for which $K=M$ is partially
hyperbolic and satisfies the assumptions of Theorem~\ref{t:Markov
towers}. The transitivity of the diffeomorphisms in that class was proved in~\cite{T}.

\subsection{Statistical properties}
A good way of describing the dynamical behavior of chaotic dynamical
systems 
is through invariant probability measures;
in our setting, a special role is played by SRB measures.

\cd
An $f$-invariant probability measure $\mu$ on the Borel sets of $M$ is called a \emph{Sinai-Ruelle-Bowen (SRB) measure} if $f$ has no zero Lypaunov exponents $\mu$ almost everywhere and the conditional measures of $\mu$ on local unstable manifolds are absolutely continuous with respect to the Lebesgue measure on these manifolds.
\fd

It is well known that SRB measures are \emph{physical measures}:  for a positive Lebesgue
measure set of points $x\in M$,
\begin{equation}
\lim_{n\to+\infty} \frac{1}{n}
     \sum_{j=0}^{n-1} \varphi(f^j(x)) = \int \varphi\,d\mu,
\quad\text{for any continuous } \varphi:M\to\RR.
\end{equation}

SRB measures for partially hyperbolic diffeomorphisms whose central direction is non-uniformly expanding  were already obtained in~\cite{ABV}. Under the assumptions of Theorem~\ref{t:Markov towers}, we also get the existence of such measures by means of \cite[Theorem 1]{Y1}.

\cd Given observables $\varphi,
\psi\colon M\to \RR$, we define the \emph{correlation function} with respect to a measure $\mu$ as
\[
\mathcal C_{\mu}(\varphi, \psi\circ f^n) = \left|\int \varphi (
\psi\circ f^n)\,d\mu - \int \varphi\,d\mu \int \psi\,d\mu\right|, \quad n\ge 0.
\]
\fd

Sometimes it is possible to obtain specific rates for which
$\mathcal C_{\mu}(\varphi, \psi)$ decays to 0 as $n\to\infty$, at least for certain classes of observables with some
regularity. See that if we take the observables
as characteristic functions of Borel sets, we get the classical
definition of \emph{mixing}.

The next corollary follows from
Theorem~\ref{t:Markov towers} together with \cite[Theorem~B]{AP2}; see also \cite[Remark~2.4]{AP2}. Though
in \cite{AP2} the decay of correlations depends on some
backward decay rates in the unstable direction, in our case we
clearly have exponential backward contraction along that direction. So the next result is indeed an extension of~\cite[Corollary~B]{AP3} to the (stretched) exponential case.

\cmc[Decay of Correlations] \label{c:decay} Let \( f: M\to M \) be a \(C^{1+}\) diffeomorphism with an invariant transitive partially hyperbolic set $K\subset M$. Assume that there
are a local unstable disk $D\subset K$ and constants \(
0<\tau\leq1\), $c>0$ such that $\leb_D\{\mathcal E>n\}=\mathcal
O(e^{-cn^{\tau}}).$ Then some power \( f^k \) has an SRB
measure \( \mu \) and there is $d>0$ such that
    $\mathcal C_{\mu}(\varphi,\psi\circ f^{kn}) = \mathcal O(e^{-dn^{\tau}})$
    for H\"older
continuous \(\varphi\colon M\to\RR \), and $\psi\in L^{\infty}(\mu)$.
 \fmc

If the recurrence times associated to the elements of the GMY structure given
by Theorem~\ref{t:Markov towers} are relatively prime, i.e.
gcd$\{R_i\}=1$, then the same conclusion holds with respect to~$f$, i.e. for $k=1$.
Using Theorem~\ref{t:Markov towers}
and \cite[Theorem 4.1]{MN1}, we also deduce a large deviations result for $f$.

\cmc[Large Deviations]\label{c:deviation} Let \( f: M\to M \) be a \(C^{1+}\) diffeomorphism with an invariant transitive partially hyperbolic set $K\subset M$. Assume that there
are a local unstable disk $D\subset K$ and  $c>0$ such that $\leb_D\{\mathcal E>n\}=\mathcal
O(e^{-cn}).$
    Given any H\"{o}lder continuous $\varphi:M\to
\mathbb R$, the limit
$$
\sigma^2= \lim_{n\to\infty}\frac1n\int\left(\sum_{j=0}^{n-1}\varphi\circ f^j-n\int\varphi\,d\mu\right)^2
d\mu
$$
exists. Moreover, if $\sigma^2>0$, then there is a rate function $c(\epsilon)$ such that
$$\lim_{n\to\infty}\frac1n\log\mu\left( \left|\frac{1}{n}\sum_{j=0}^{n-1}\varphi\circ f^j-\int\varphi\,
d\mu\right|>\epsilon \right)=-c(\epsilon).$$
\fmc

We observe that in this last result we do not need to take any power of $f$; see the considerations in \cite[Section~2.2]{MN1}.
It remains an interesting open question to know whether we have a similar result in the stretched exponential case; this depends only on a stretched exponential version of \cite[Theorem 4.1]{MN1}.
Further statistical properties, as the Central Limit Theorem or an Almost Sure Invariant Principle, which have already been obtained in~\cite{AP3}, could still  be deduced from Theorem~\ref{t:Markov towers}.



\section{Preliminary results}\label{se.preliminaries}

 In this section we make a  revision of some concepts and results from \cite{ABV} that will be useful for the proof of Theorem~\ref{t:Markov towers}. In particular,  we state a bounded distortion property at hyperbolic times for iterations of $f$ over center-unstable disks with a H\"older control on the tangent direction.

First we give the  definition of the center-unstable cone field. We consider continuous extensions of $E^{s}$ and $E^{cu}$ to some neighborhood $U$ of $K$ that we denote
by $\tilde{E}^{s}$ and~$\tilde{E}^{cu}$, respectively. These extensions are not necessarily invariant under $Df$. 
\begin{Definition}
Given $0<a<1$, the {\em center-unstable cone field
$C_a^{cu}=\left(C_a^{cu}(x)\right)_{x\in U}$ of width~$a$\/} is defined by
\begin{equation*}
 C_a^{cu}(x)=\big\{v_1+v_2 \in \tilde{E}_x^{s}\oplus
\tilde{E}_x^{cu} \text{\ such\ that\ } \|v_1\| \le a \|v_2\|\big\};
\end{equation*}
the {\em stable cone field
$C_a^{s}=\left(C_a^{s}(x)\right)_{x\in U}$ of width $a$\/} is defined similarly,
\begin{equation*}
 C_a^{s}(x)=\big\{v_1+v_2 \in \tilde{E}_x^{s}\oplus
\tilde{E}_x^{cu} \text{\ such\ that\ } \|v_2\| \le a \|v_1\|\big\}.
\end{equation*}
\end{Definition}

Notice that the dominated splitting property still holds for the extensions $\tilde{E}^{s}$ and $\tilde{E}^{cu}$, provided $U$  is taken sufficiently small. Up to slightly increasing $\lambda<1$, we fix $a>0$ and $U$ small enough so that the domination condition~
\eqref{domination1} still holds for any
point $x\in U\cap f^{-1}(U)$ and every $v^{s}\in C_a^{s}(x)$, $v^{cu}\in C_a^{cu}(f(x))$:
$$
\|Df(x)v^{s}\|\cdot\|Df^{-1}(f(x))v^{cu}\|
\le\lambda\|v^{s}\|\,\|v^{cu}\|.
$$
The center-unstable cone
field is forward invariant
 $$Df(x) C_a^{cu}(x)\subset
C_a^{cu}(f(x)),\quad \mbox{any}\,\, x\in K,$$ and this holds for any $x\in U\cap
f^{-1}(U)$ by continuity.


We say that an embedded $C^1$ submanifold $N\subset U$ is \emph{tangent to the center-unstable cone field} or a \emph{center-unstable (cu) disk} if the tangent subspace to $N$ at each point $x\in N$ is 
contained in the corresponding cone $C^{cu}_a (x)$.

\subsection{H\"older control of the tangent direction}
The goal of this subsection is to introduce  $\kappa(N)$ in \eqref{e.kappa} for a submanifold $N\subset U$ and the constant $C_1$ in
Proposition~\ref{c.curvature}. This will be useful for the statement  of Proposition~\ref{l.contraction}.

  The notion of H\"older variation of the tangent bundle for a centre-unstable manifold $N\subset U$ is introduced
in local coordinates as follows.
First we take $\delta_0>0$ small enough so that the inverse of the
exponential map $\exp_x$ is defined on the $\delta_0$ neighborhood
of every point $x\in U$.
Then we identify this neighborhood of $x$ with the corresponding
neighborhood $V_x$ of the origin in $T_x N$, through the local chart defined by $\exp_x^{-1}$.
Accordingly, we identify $x$ with $0\in T_xN$.
Reducing $\delta_0$, if necessary, we may suppose that
$\tilde{E}^{cs}_x$ is contained in the center-stable cone
$C^{cs}_a(y)$ of every $y\in V_x$.
In particular, the intersection of $C^{cu}_a(y)$ with
$\tilde{E}^{cs}_x$ reduces to the zero vector.
Then, the tangent space to $T_yN$ is parallel to the graph
of a unique linear map $A_x(y):T_x N \to \tilde{E}_x^{cs}$.
Given constants $C>0$ and $0<\zeta\le 1$, we say that
{\em the tangent bundle to $N$ is $(C,\zeta)$-H\"older\/} if
\begin{equation*}
\label{e.holder}
\|A_x(y)\|\le C d_x(y)^\zeta
\quad\text{for every }y\in N \cap V_x \text{ and } x\in U ,
\end{equation*}
where $d_x(y)$ denotes the distance from $x$ to $y$ measured along $N\cap V_x$,
defined as the length of the shortest curve in $N\cap V_x$ joining $x$ to $y$.

Recall that we have taken the neighborhood $U$ and the cone
width $a$ sufficiently small so that the domination property
remains valid for vectors in the cones $C_a^{cs}(z)$,
$C_a^{cu}(z)$, and for any point $z\in U$.
Hence, there are $\lambda_1 \in (\lambda,1)$ and $\zeta\in(0,1]$ such
that
\begin{equation}
\label{e.dominacao}
\|Df(z) v^{cs}\| \cdot \|Df^{-1}(f(z)) v^{cu}\|^{1+\zeta} \le \lambda_1 < 1
\end{equation}
for all unit vectors $v^{cs}\in C_a^{cs}(z)$
and $v^{cu}\in C_a^{cu}(z)$, with $z\in U$.
Then, up to reducing $\delta_0>0$ and slightly increasing $\lambda_1<1$, inequality 
(\ref{e.dominacao}) still holds if we replace $z$ by any $y\in V_x$,
$x\in U$ (where $\|\cdot\|$ means the Riemannian metric in the
corresponding local chart).

From here on we fix  $\lambda_1 \in (\lambda,1)$ and $\zeta\in(0,1]$ as above.
Given a $C^1$ submanifold $N\subset U$, we define
\begin{equation}
\label{e.kappa}
\kappa(N)=\inf\{C>0:\text{the tangent bundle of $N$ is $(C,\zeta)$-H\"older}\}.
\end{equation}
The proof of the next result is given in
\cite[Corollary 2.4]{ABV}.

\cpr
\label{c.curvature}
There exists $C_1>0$ such that for any $C^1$ submanifold  $N\subset U$ tangent to the center-unstable cone field
\begin{enumerate}
\item
there exists $n_0\ge 1$ such that $\kappa(f^n(N)) \le C_1$ for every
$n\ge n_0$ such that $f^k(N) \subset U$ for all $0\le k \le n$;
\item
if $\kappa(N) \le C_1$, then $\kappa(f^n(N)) \le C_1$ for each
$n\ge 1$ such that $f^k(N)\subset U$ for all $0\le k \le n$;
\item
 if $N$ and $n$ are as in the previous item, then the functions
$$
J_k: f^k(N)\ni x \longmapsto \log |\det \big(Df \mid T_x f^k(N)\big)|,
\quad\text{$0\le k \le n$},
$$
are $(L_1,\zeta)$-H\"older continuous with $L_1>0$
depending only on $C_1$ and $f$.
\end{enumerate}
\fpr



\subsection{Hyperbolic times and bounded distortion} \label{sub.hyp}
We can derive uniform expansion and bounded distortion from NUE
assumption in the center-unstable direction, with the definition
below. Here we do not need the full strength of partially hyperbolic, we only consider the cu-direction has condition~\eqref{Nue}.

\cd \label{d.hyperbolic1} Given $0<\sigma<1$, we say that $n$ is a
{\em $\sigma$-hyperbolic time} for  $x\in K$ if
$$
\prod_{j=n-k+1}^{n}\|Df^{-1} \mid E^{cu}_{f^{j}(x)}\| \le \sigma^k,
\qquad\text{for all $1\le k \le n$.}
$$
For $n\ge 1$, we define
 $$
 H_n(\sigma)=\{x\in K\colon \text{ $n$ is a $\sigma$-hyperbolic time for
 $x$ }\}.
 $$
\fd

\cre Given $0<\sigma<1$ and $x\in H_n(\sigma)$, we obtain
\begin{equation}\label{contra}
    \|Df^{-k} \mid E^{cu}_{f^{n}(x)}\| \le \prod_{j=n-k+1}^{n}\|Df^{-1}
\mid E^{cu}_{f^{j}(x)}\| \le \sigma^{k},
\end{equation}
which means that $Df^{-k} \mid E^{cu}_{f^{n}(x)}$ is a contraction for
$1\le k\le n$.
\fre

The next result gives the existence of $\sigma$-hyperbolic times
for almost all points in a center-unstable disk $D$ as in Theorem~\ref{t:Markov towers}, and gives indeed the asymptotic positive frequency of $\sigma$-hyperbolic times for such points. Its proof
can be found in \cite[Lemma 3.1, Corollary~3.2]{ABV}.

\cpr\label{l:hyperbolic2}
    There exist \( 0<\theta\le 1 \) and $0<\sigma<1$ such that for every $x\in D$ with $\mathcal E(x)\le n$  there exist $\sigma$-hyperbolic times $1\le n_1<\dots<n_\ell\le n$ for $x$ with $\ell\ge\theta n$.
\fpr

We remark that both $\theta$ and $\sigma$ are uniform constants independent  of the point $x$ or the iterate $n$.  In the sequel, we fix $0<\sigma<1$ as in the previous proposition and write simply $H_n$ for $H_n(\sigma)$.

\cre \label{re.delta1} By continuity, we may choose $a>0$ (recall the definition of the cone-fields) 
and $\delta_1>0$ sufficiently small (in particular, the
$\delta_1$-neighborhood of $K$ must be contained in $U$) such that
\begin{equation}
\label{e.delta1} \|Df^{-1}(f(y)) v \| \le \frac{1}{\sqrt\sigma}
\|Df^{-1}|E^{cu}_{f(x)}\|\,\|v\|,
\end{equation}
whenever $x\in K$, $\dist(y,x)\le\delta_1$ and $v\in C^{cu}_a(y)$.
\fre

By the first item of Proposition~\ref{c.curvature} we may assume that the center-unstable disk $D\subset K$ in the statement of Theorem~\ref{t:Markov towers} satisfies $\kappa(D)\le C_1$. The next result is then a consequence \cite[Lemma~2.7 \& Proposition~2.8]{ABV}.


\cpr \label{l.contraction} 
There exists $C_2>1$ such that for any 
 $x\in  D\cap H_n$ at a positive distance from $\partial D$, for  $n$ sufficiently large   there is a neighborhood $V_n(x)$  of $x$ in $ D$ such that:
\begin{enumerate}
    \item $f^{n}$ maps $V_n(x)$ diffeomorphically onto a center-unstable disk $B^{cu}(f^{n}(x),\delta_1)$;
    \item for every $1\le k
\le n$ and $y, z\in V_n(x)$
 $$\dist_{f^{n-k}(V_n(x))}(f^{n-k}(y),f^{n-k}(z)) \le
\sigma^{k/2}\dist_{f^n(V_n(x))}(f^{n}(y),f^{n}(z));$$
  \item \label{p.distortion} for all $y,z\in V_n(x)$
$$\log \frac{|\det Df^{n} \mid T_y \Delta|}
                     {|\det Df^{n} \mid T_z \Delta|}
            \le C_2 \dist_{f^n(D)}(f^{n}(y),f^{n}(z))^\zeta;$$
\item for any
Borel sets $Y, Z\subset V_n(x)$ 
$$
\frac{1}{C_2}\frac{\leb(Y)}{\leb(Z)}\leq
\frac{\leb(f^n(Y))}{\leb(f^n(Z))} \leq C_2\frac{\leb(Y)}{\leb(Z)}.
$$
\end{enumerate}
\fpr
The sets \( V_n (x)\) will be called \emph{hyperbolic pre-balls}, and their
images $B^{cu}(f^n(x),\delta_1)$  called \emph{hyperbolic balls}.
Item (\ref{p.distortion}) gives the \emph{bounded distortion} at hyperbolic times.

%
%
%

\section{Partition on a reference disk}
\label{se.Gibbs-Markov-Young structure}

In this section we prove the existence of  a set with product structure in
 $K$. We essentially describe the geometrical and dynamical
nature. This process has three steps. Firstly we prove the
existence of a center-unstable disk $\Delta_0$ whose hyperbolic pre-disks contained in it return to a neighborhood of $\Delta_0$ under forward iterations and the image projects along stable leaves covering $\Delta_0$ completely.
Secondly, we define a partition on $\Delta_0$ whose
construction is inspired essentially on \cite[Section 3]{AP3} and
\cite[Section 3 \& 4]{ADL}. That is, we improve the product structure construction performed in  \cite{ADL} for non-invertible NUE maps, extending it to the partially hyperbolic setting; see Subsections~\ref{s.partition} and~\ref{s.product}. Finally we show that the set with a product structure satisfies Definition \ref{d.product}.

\subsection{The reference disk}
\label{s.disk}

Let $D$ be a local unstable disk as in  Theorem~\ref{t:Markov towers}.  Given $\delta_1>0$ as in Remark~\ref{re.delta1}, we take $0<\delta_s<\delta_1/2$ such that points in
$K$ have local stable manifolds of radius $\delta_s$. In particular,
these local stable leaves are contained in the neighborhood~$U$ of~$K$; recall
\eqref{e.delta1}.

\cd\label{d.ucross}  Given a disk $\Delta\subset D$, we define the \emph{cylinder} over $\Delta$ $$\cc(\Delta)=\bigcup_{x\in\Delta}W^s_{\delta_s}(x)$$
and consider $\pi$ the projection from $\cc(\Delta)$ onto $\Delta$
along local stable leaves.
We say that a center-unstable disk $\gamma^u$ {\em $u$-crosses} $\cc(\Delta)$ if $\pi(\gamma^u\cap \mathcal C(\Delta))=\Delta.$
\fd

For technical reasons  (see Lemma~\ref{l.in V_n}) we shall take the constant $$\delta'_1=\frac{\delta_1}{12}>0,$$ and consider $V'_n(x)$ the part of $V_n(x)$ which is sent by $f^n$ onto $B^{cu}(f^n(x),\delta'_1)$. These sets $V'_n(x)$  will also be called hyperbolic pre-balls.
The next lemma is a consequence of \cite[Lemma 3.1 \& 3.2]{AP3}.

\cle\label{l.N0q}
There are   $p\in D$ and $N_0\ge 1$ such that for all $\delta_0>0$ sufficiently small
and each hyperbolic pre-disk  $V'_n(x)\subseteq D$ there is $0\le m\le N_0$ such that  $f^{n+m}(V'_n(x))$  $u$-crosses $\mathcal C(\Delta_0)$, where $\Delta_0=B^{cu}(p,\delta_{0})$  is the
subdisk in $D$ of radius $\delta_0$  centerd at \( p \).
\fle


Now we fix $p\in D$, $N_0\ge 1$ and $\delta_0>0$ small enough such that the conclusions of Lemma~\ref{l.N0q} hold and define
$$\Delta_0=\Delta_0^0=
B^{cu}(p,\delta_{0})\qand  \Delta_0^1= B^{cu}(p,2\delta_{0}).
$$
We consider the corresponding cylinders
\begin{equation}\label{eq.C0}
 \cc^i_0=\bigcup_{x\in\Delta_0^i
}W^s_{\delta_s}(x), \quad\text{ for }
 i=0,1.
\end{equation}
Denoting $\pi$ the projection along stable leaves, we have
$$\pi(\cc^i_0)=\Delta_0^i,\quad\mbox{for }i=0,1.$$

\cre We assume that each disk $\gamma^u$ $u$-crossing $ \cc^i_0$ ($i=0,1$) is a disk
centered at a point of $W^s_{\delta_s}(p)$ and with the same radius
of $\Delta^{i}_0$. We ignore the difference of radius caused by the height of the cylinder and the angles of the two dominated splitting bundles. Let the top and bottom components of $\partial  \cc^1_0$ be denoted by  $\partial^u  \cc^1_0$, i.e. the set of points $z\in\partial \cc^1_0$ such that $z\in
\partial W^s_{\delta_s}(x)$ for some $x\in \Delta_0$. By the
domination property, we may take $\delta_0>0$ small enough so that any
center-unstable disk $\gamma^u$ which is contained in $ \cc^1_0$ and intersecting
$W^s_{\delta_s/2}(p)$ does not reach $\partial^u \cc^1_0$.\fre

Given a hyperbolic pre-ball $V'_n(x)$, there is $0\le m\le N_0$ as in the conclusion of Lemma~\ref{l.N0q}, and for each $i=0,1$ there is a center-unstable disk 
$\omega_{n,m}^{i,x}\subset V'_n(x)$ such that
\begin{equation}\label{D.candidate2}
 \pi\left(f^{n+m}(\omega_{n,m}^{i,x})\right)=\Delta_0^i.
\end{equation}
As condition \eqref{D.candidate2} may in principle hold for several values of $m$, for definiteness we shall always assume that $m$ takes the smallest possible value. Observe that the center-unstable disk $\omega_{n,m}^{i,x}$ is associated to $x$, by construction, but does not necessarily contain~$x$.

The sets of the type $\omega_{n,m}^{0,x}$, with $x\in H_n\cap\Delta_0$, are the natural candidates to be in the partition~$\mathcal P$.
For $k\ge n$, set the \emph{annulus} around $\omega_{n,m}^{0,x}$
\begin{equation}\label{eq.Annulus}
A_{k}(\omega_{n,m}^{0,x})=\left\{y\in \omega_{n,m}^{1,x}:0<\dist_D\left((\pi\circ f^{n+m})(y),\Delta_0\right)\le \delta_0\sigma^{\frac{k-n}{2}}\right\}.
\end{equation}
Obviously $$A_{n}(\omega_{n,m}^{0,x})\cup\omega_{n,m}^{0,x}=\omega_{n,m}^{1,x}.$$
In the sequel, we shall frequently omit the symbols $m$, $0$, $x$ or $n$ in the notation
 and simply use $\omega_n^x$, $\omega_n$ or even $\omega$ to denote an element  $\omega_{n,m}^{0,x}$.

\subsection{The partition}
\label{s.partition}

In this subsection we describe an algorithm to construct  a countable ($\leb_D$ mod 0)  partition $\cp$ of $\Delta_0$. The algorithm is similar
to the one in \cite{ADL}, but in the present context of a diffeomorphism, each element of the partition will
return to another center-unstable disk which \emph{u-crosses} $\mathcal C_0^0$.
Along the process we shall introduce inductively sequences of objects
$(\Delta_n)_n$, $(\Omega_n)_n$, $(A_n)_n$ 
and $(S_n)_n$. For each $n$, 
 $\Delta_n$ is the set of points which  does not belong to any element of the partition constructed up to time $n$,
$\Omega_n$ is the union of elements of the partition constructed at step $n$ and $A_n$ is the union of rings around the chosen elements at time $n$. 
 The set
$S_n$  (called the union of \emph{satellites}) contains the components which could have been chosen for the partition but intersect already chosen elements. 
A key point in our argument is property \eqref{Hdel_n} below, which says that every point having a hyperbolic time at a given time $n$ will belong to either to an element of the partition or to some satellite. All these  and some other auxiliary objects will be defined inductively in the remaining part of this subsection.

\subsubsection*{First step of induction}

Fixing some large \(n_0\in\mathbb{N} \), we only consider the dynamics after time
$n_0$.  Notice that, by boundedness on the derivative, there is a minimum radius $r_{n_0}>0$  such that each hyperbolic pre-disk $V'_{n_0}(x)$  with $x\in H_{n_0}$ contains a center-unstable disk of radius $r_{n_0}$. Hence, there is a finite set 
$I_{n_0}=\{z_1,\ldots,z_{N_{n_0}}\}\in H_{n_0}\cap\Delta_0$ such that
$$H_{n_0}\cap \Delta_0\subset V'_{n_0}(z_1)\cup\dots\cup
V'_{n_0}(z_{N_{n_0}}).$$
Consider a maximal family 
$$
\Omega_{n_0}=\left\{\omega_{n_0,m_0}^{0,x_0},
\omega_{n_0,m_1}^{0,x_1},\ldots,
\omega_{n_0,m_{k_{n_0}}}^{0,x_{k_{n_0}}}\right\}.
$$
of pairwise disjoint sets of the type (\ref{D.candidate2}) contained in
$\Delta_0$ with $\{x_0,\cdots,x_{k_{n_0}}\}\subset I_{n_0}$, and let
$$ \tilde I_{n_0}= I_{n_0}\setminus \{x_0,\cdots,x_{k_{n_0}}\}.$$
The sets in $\Omega_{n_0}$ are precisely the elements of the partition $\cp$ constructed in the
$n_0$-step  of the algorithm (our first step of induction).
We define the \emph{recurrence
time} $R(x)=n_0+m_i$ for each
$x\in\omega^{0,x_i}_{n_0,m_i}$ with $0\leq i \leq k_{n_0}$.
We need to keep track of the sets $\{\omega^{1,z}_{n_0,m}: z\in
\tilde I_{n_0},0\le m\le N_0\}$ which, for some $\omega\in \Omega_{n_0}$, overlap $\omega\cup A_{n_0}(\omega)$ or $\Delta_0^c=D\setminus\Delta_0$.
%
Given $\omega\in \Omega_{n_0}$, 
we define for each $0\le m\le N_0$
\begin{equation*}
    I_{n_0}^m(\omega)=\left\{x \in \tilde I_{n_0}: \omega^{1,x}_{n_0,m}\cap\left( \omega \cup A_{n_0}(\omega)\right)\neq\emptyset\right\},
\end{equation*}
(recall \eqref{eq.Annulus})
and  the \emph{$n_0$-satellite} around $\omega$
\begin{equation}\label{eq.Sn0}
    S_{n_0}(\omega)=\bigcup_{m=0}^{N_0}\bigcup_{x\in
    I_{n_0}^m(\omega)}V'_{n_0}(x)\cap (\Delta_0\setminus
    \omega).
\end{equation}
We define 
\begin{equation*}
S_{n_0}(\Omega_{n_0}) =\bigcup_{\omega\in
\Omega_{n_0}}{S}_{n_0}(\omega)
\end{equation*}
and $$S_{n_0}(\Delta_0) = S_{n_0}(\Omega_{n_0}).$$
We also define the $n_0$-satellite associated to $\Delta_0^c=D\setminus\Delta_0$
\begin{equation*}
    S_{n_0}(\Delta_0^c)= \bigcup_{m=0}^{N_0}\bigcup_{x\in \tilde I_{n_0}\atop \omega^{1,x}_{n_0,m}\cap\Delta_0^c\neq\emptyset}V'_{n_0}(x)\cap \Delta_{0}, .
\end{equation*}
We will show in the general step that the Lebesgue measure of
$S_{n_0}(\Delta_{0}^c)$ is exponentially small.
The \emph{global $n_0$-satellite} is
\begin{equation*}
{S}_{n_0} = S_{n_0}(\Delta_0) \cup {S}_{n_0}(\Delta_0^c),
\end{equation*}
The remaining points at step $n_0$  are
\begin{equation*} \Delta_{n_0} = \Delta_0\setminus
\cup_{\omega\in \Omega_{n_0}}\omega.
\end{equation*}
We clearly have for this first step of induction the key property 
\begin{equation*}
H_{n_0}\cap\Delta_0\,\subset\, S_{n_0}\cup \bigcup_{\omega\in \Omega_{n_0}}\omega.
\end{equation*}

\subsubsection*{General step of induction}

The general step of the construction follows the ideas of the first step with
minor modifications. Given $n>n_0$, assume that the sets
$\Omega_\ell$, $\Delta_{\ell}$, $A_\ell$ and ${S}_{\ell}$ are defined for each \( n_0\le \ell\leq n-1 \).
As in the first step, there is a finite set of points
$I_{n}=\{z_1,\ldots,z_{N_{n}}\}\in  H_n\cap\Delta_{n-1}$ such that
$$ H_n\cap\Delta_{n-1}\subset
V'_{n}(z_1)\cup\dots\cup V'_{n}(z_{N_{n}}).$$ Now we consider a maximal family 
$$\Omega_n=\{\omega_{n,m_0}^{0,x_0},
\omega_{n,m_1}^{0,x_1},\ldots,
\omega_{n,m_{k_{n}}}^{0,x_{k_{n}}}\}$$
of pairwise disjoint sets of type \eqref{D.candidate2} with $\{x_0,\cdots,x_{k_{n}}\}\subset I_{n}$ contained in
$\Delta_{n-1}$ and
satisfying
$$ \omega_{n,m}^{1,x_i}\cap\left(
\cup_{\ell=n_0}^{n-1}\cup_{\omega\in \Omega_\ell}\left(\omega\cup A_n(\omega)\right)\right)=\emptyset,\quad
\text{ for }i=1,\dots,k_n.$$
The sets in $\Omega_n$ are the elements of the partition $\cp$ constructed in the
$n$-step  of the algorithm.
We set the \emph{recurrence time} $R(x)=n+m_i$ for each $x\in
\omega^{0,x_i}_{n,m_i}$ with $0\leq i \leq \ell_n$. 
Let $$ \tilde I_{n}= I_{n}\setminus \{x_0,\cdots,x_{k_{n}}\}.$$
Given $\omega\in \Omega_{n_0}\cup\cdots\cup \Omega_n$ and $ 0\leq
m\leq N_0 $, set
\begin{equation*}
    I_{n}^m(\omega)=\left\{x \in\tilde I_{n}: \omega^{1,x}_{n,m}\cap
    \left(\omega\cup A_n(\omega)\right)
\neq\emptyset\right\}.
\end{equation*}
(recall \eqref{eq.Annulus}) and the \emph{$n$-satellite} around $\omega$
\begin{equation}\label{eq.Sn}
S_n(\omega)= \bigcup_{m=0}^{N_0}\bigcup_{x\in
    I_{n}^m(\omega)}V'_{n}(x)\cap (\Delta_0\setminus \omega).
\end{equation}
We define for $n_0\le i\le n$
\begin{equation*}
S_{n}(\Omega_{i}) =\bigcup_{\omega\in
\Omega_{i}}{S}_{n}(\omega)
\end{equation*}
and 
\begin{equation*}
    S_{n}(\Delta_0)= \bigcup_{i=n_0}^{n}S_{n}(\Omega_{i}) .
\end{equation*}
Similarly, the \emph{$n$-satellite} associated to $\Delta_0^c$ is
\begin{equation*}
    S_{n}(\Delta_0^c)= \bigcup_{m=0}^{N_0}\bigcup_{x\in \tilde I_{n}\atop\omega^{1,x}_{n,m}\cap\Delta_0^c\neq\emptyset}V'_{n}(x)\cap\Delta_0.
\end{equation*}

\cre\label{lastremark}
Observe that the volume of $S_n(\Delta_0^c)$
decays exponentially fast. In fact, it follows from the definition of
${S}_{n}({\Delta_0^c})$ and Proposition~\ref{l.contraction} that
$$
{S}_{n}({\Delta_0^c})\subset \{x \in \Delta_0:\,
\dist_D(x,\partial\Delta_0)\le 2\delta_0\sigma^{n/2}\}.
$$
Thus, there exists $\rho>0$ such that
$
\leb_D ({S}_{n}({\Delta_0^c}))\leq \rho\sigma^{n/2}.
$
\fre

Finally we define the \emph{global $n$-satellite} 
\begin{equation*}
{S}_{n} = S_n(\Delta_0) \cup {S}_{n}(\Delta_0^c),
\end{equation*}
and
\begin{equation*}
\Delta_{n} = \Delta_0\setminus \bigcup_{i=n_0}^{n}\bigcup_{\omega\in\Omega_i}\omega.
\end{equation*}
We clearly have by construction 
\begin{equation}\label{Hdel_n}  H_n\cap\Delta_0\subset
{S}_{n}\cup\bigcup_{i=n_0}^{n}\bigcup_{\omega\in\Omega_i}\omega.
\end{equation}

\subsection {Estimates on the satellites}
%

For the sake of notational simplicity, we shall avoid the superscript  0 in  the sets $\omega^{0,x}_{n,m}$.
The next lemma shows
that, given $n$ and $m$, the conditional volume of the union of
$\omega^{x}_{n,m}$ which is not far from one chosen element is
proportional to the conditional volume of this element. The
proportion constant is uniformly summable with respect to $n$.

Though we consider here the case of partially hyperbolic attractor
and  the construction has naturally obvious modifications, the proofs of the two items in the next
lemma are  essentially the same of \cite[Lemmas 4.4 \&
4.5] {ADL}.

\cle \label{l.preSn}
\begin{enumerate}
\item \label{estpreball} There
exists $C_{3}>0$ such that, for any $n\ge n_0$, $0\leq m\leq N_0$,
and finitely many points $\{x_1,\dots ,x_N\}\in I_n$ satisfying
$\omega_{n,m}^{x_i}=\omega_{n,m}^{x_1}$ ($1\le i\le N$), we have
\begin{equation*}
    \leb_D\left(\bigcup_{i=1}^{N} V'_n(x_i)\right)\leq C_3\leb_D(
    \omega^{x_1}_{n,m}).
   \end{equation*}
\item \label{d.estimativas}There exists $C_4> 0$ such that for $k\ge n_0$, $\omega\in \Omega_k$ and $0\le m\le N_0$, given any $n\ge k$, we obtain
\begin{equation*}
\leb_D \left(\bigcup_{x \in I_n^m(\omega)}\omega_{n,m}^x\right)\leq
C_4\sigma^{\frac{n-k}{2}} \leb_D(\omega).
\end{equation*}

\end{enumerate}
\fle

 \cpr \label{d.prop.Sn}
There exists $C_5>0$ such that for any $\omega\in\Omega_k$ and $n\geq
k$, we have
\begin{equation*}
\leb_D({S}_{n}(\omega))\le C_5\sigma^{\frac{n-k}{2}}\leb_D(\omega).
\end{equation*} \fpr

\dem Consider now $k\ge n_0$ and $n\ge k$. Fix  $\omega \in
\Omega_k$ and consider ${S}_{n}(\omega)$ the $n$-satellite
associated to it. By definition of $S_{n}(\omega)$ and
 the first item of Lemma \ref{l.preSn} we have
\begin{eqnarray*}
  \leb_D(S_{n}(\omega)) &\leq& \sum_{m=0}^{N_0}\sum _{x\in
    I_n^m(\omega)}\leb_D \left( V'_{n}(x)\cap (\Delta_0\setminus \omega)\right)+\leb_D(V'_k(\omega)\setminus\omega)  \nonumber\\
  &\leq& C_3\sum_{m=0}^{N_0}\leb_D \left(\bigcup_{x\in
    I_n^m(\omega)} \omega_{n,m}^x\right) + C_3\leb_D(\omega).
\end{eqnarray*}
In this last step we have used the obvious fact that for fixed $n,m$
the sets of the form $\omega^{x}_{n,m}$ with $x \in I_n^m(\omega)$
are pairwise disjoint.
Thus, by the second item of Lemma~\ref{l.preSn},
\begin{equation*}
  \leb_D(S_{n}(\omega)) \le
  C_3(C_4(N_0+1)+1)\sigma^{\frac{n-k}{2}}\leb_D(\omega).
\end{equation*}
Take $C_5=C_3(C_4(N_0+1)+1)$. \cqd


\cd
Given $k\ge n_0$ and $\omega^x_{k,m}\in \Omega_k$ we define for $n\ge k$
 $$B_n^k(x)=S_n(\omega_{k,m}^{x})\cup
\omega_{k,m}^{x}\quad\text{and}\quad t(B_n^k(x))=k.$$  
The
set $\omega^x_{k,m}$  will be called  the \emph{core} of $B_n^k(x)$
and denoted  $C(B_n^k(x))$.
\fd

Notice that given a set $B^k_n(x)$ as above, 
$k$ is a hyperbolic time for $x$ and $n$ is a hyperbolic time for some point in the $n$-satellite involved in the definition of $B^k_n(x)$; recall the definition of the satellites.

\cre\label{re.cores} From  the construction of the partition $\mathcal P$ performed in Section~\ref{s.partition}, we easily deduce that, given any two cores $C(B_{n_1}^{k_1}(x_1))$ and $C(B_{n_2}^{k_2}(x_2))$, we have either  $x_1=x_2$ and $C(B_{n_1}^{k_1}(x_1))=C(B_{n_2}^{k_2}(x_2))$, or $C(B_{n_1}^{k_1}(x_1))\cap C(B_{n_2}^{k_2}(x_2))=\emptyset$.
\fre

In the sequel we prove  the main features of these sets $B_n^k(x)$. The next result follows immediately from Proposition~\ref{d.prop.Sn}.

\begin{Corollary}\label{l.B}
 For all  $n\ge k$ we have
 $$\leb_D(B_n^k(x))\le
(C_5+1)\leb_D(C(B_n^k(x))).$$
\end{Corollary}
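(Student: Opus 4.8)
The statement is Corollary~\ref{l.B}, and by the preceding sentence it is meant to follow immediately from Proposition~\ref{d.prop.Sn}. The plan is simply to unwind the definitions. By definition, $B_n^k(x) = S_n(\omega_{k,m}^x)\cup \omega_{k,m}^x$ with core $C(B_n^k(x)) = \omega_{k,m}^x \in \Omega_k$, so
\[
\leb_D(B_n^k(x)) \le \leb_D(S_n(\omega_{k,m}^x)) + \leb_D(\omega_{k,m}^x).
\]
Now apply Proposition~\ref{d.prop.Sn} with $\omega = \omega_{k,m}^x \in \Omega_k$ (legitimate since $n\ge k$): it gives $\leb_D(S_n(\omega_{k,m}^x)) < C_5\,\sigma^{(n-k)/2}\,\leb_D(\omega_{k,m}^x)$.

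Combining the two displays,
\[
\leb_D(B_n^k(x)) < C_5\,\sigma^{(n-k)/2}\,\leb_D(\omega_{k,m}^x) + \leb_D(\omega_{k,m}^x) = \big(C_5\,\sigma^{(n-k)/2} + 1\big)\,\leb_D(\omega_{k,m}^x).
\]
Since $0<\sigma<1$ and $n\ge k$, we have $\sigma^{(n-k)/2}\le 1$, hence $C_5\,\sigma^{(n-k)/2}+1 \le C_5+1$. Therefore $\leb_D(B_n^k(x)) \le (C_5+1)\,\leb_D(C(B_n^k(x)))$, as claimed.

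There is no real obstacle here: the only thing to be slightly careful about is that the union defining $B_n^k(x)$ need not be disjoint, so one uses subadditivity of $\leb_D$ rather than additivity, and that the bound from Proposition~\ref{d.prop.Sn} is stated with a strict inequality while the corollary states a non-strict one — harmless, since we are coarsening the constant from $C_5\sigma^{(n-k)/2}+1$ up to $C_5+1$ anyway. (If one instead wanted to keep the sharper factor $C_5\sigma^{(n-k)/2}+1$, which is in fact what gets used later for summability in $n-k$, the same argument applies verbatim without the final coarsening step; but as stated, $C_5+1$ suffices.)
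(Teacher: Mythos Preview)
Your argument is correct and is exactly the intended one: the paper itself gives no explicit proof, merely stating that the corollary follows immediately from Proposition~\ref{d.prop.Sn}, and your unwinding of the definition $B_n^k(x)=S_n(\omega_{k,m}^x)\cup\omega_{k,m}^x$ together with the bound $\leb_D(S_n(\omega_{k,m}^x))<C_5\sigma^{(n-k)/2}\leb_D(\omega_{k,m}^x)\le C_5\leb_D(\omega_{k,m}^x)$ is precisely that immediate deduction.
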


The dependence of $\delta_1'$ on $\delta_1$ becomes clear in the proof of the next lemma.

\begin{Lemma}\label{l.in V_n}
If $\delta_1'>0$ is sufficiently small (only depending on $\delta_1$), then for all $k'\ge k\ge n_0 $, $n\ge k$, $n'\ge k'$ and $B_n^k(x)\cap
B_{n'}^{k'}(y)\neq\emptyset$ we have $$C(B_n^k(x))\cup
C(B_{n'}^{k'}(y))\subset V_k(x).$$
\end{Lemma}
\begin{proof} 
First of all observe that  $k$ is a hyperbolic time for  $x$ and
 $f^k(C(B_n^k(x)))$ is contained in  a $cu$-disk of radius $\delta_1'$ centered at $f^k(x)$.
 On the other hand, by definition  each  point in $B_n^k(x)$ which does not belong to $C(B_n^k(x))$ must necessarily be in some hyperbolic pre-disk (with hyperbolic time $n$) intersecting $C(B_n^k(x))$. Then,  the second assertion of Proposition~\ref{l.contraction} yields 
$$\diam_{f^k(D)}\left(f^k(B_n^k(x))\right)\leq 2\delta'_1+4\delta'_1\sigma^{\frac{n-k}{2}}\le 6\delta_1'.$$
Similarly,
$$\diam_{f^{k'}(D)}\left(f^{k'}(B_{n'}^{k'}(y))\right)\leq  6\delta_1'.$$
Using this and the second assertion of Proposition~\ref{l.contraction}, we also have
$$\diam_{f^{k}(D)}\left(f^{k}(B_{n'}^{k'}(y))\right)\leq  6\delta_1'\sigma^{\frac{k'-k}{2}}\le 6\delta_1'.$$
Hence, as we are taking  $\delta_1'=\delta_1/12$, we have  $f^k(B_n^k(x))\cup f^{k}(B_{n'}^{k'}(y))$  contained in the center-unstable disk of radius $\delta_1$ centered at $f^k(x)$. This clearly gives the result.
%
%
%
\end{proof}

Notice that the proof of the previous lemma gives in fact $B_n^k(x)\cup B_{n'}^{k'}(y)\subset V_k(x)$, but the conclusion with the cores is enough for our purposes. The  sets $A_n$ play a key role in the proof of the next result and have been introduced exclusively to make this proof work.

\begin{Lemma}\label{l.P}
There exists $P\ge N_0$ such that for all $t_2> t_1\ge n_0$, $x\in H_{t_1}$ and $y\in H_{t_2}\setminus B_{t_2}^{t_1}(x)$ we have
$$B_{t_2+P}^{t_1}(x)\cap B_{t_2+P}^{t_2}(y)=\emptyset.$$
\end{Lemma}
\begin{proof}
Notice that by definition we have $t_1$ a hyperbolic time for $x$ and $t_2$ a hyperbolic time for $y$. 
Suppose, by contradiction, that we have
$B_{t_2+P}^{t_2}(y)\cap B_{t_2+P}^{t_1}(x)\neq\emptyset$ for all $P\ge N_0$. Take a
point $z\in B_{t_2+P}^{t_2}(y)\cap B_{t_2+P}^{t_1}(x)$.  Observing that $t_2+P$ is a hyperbolic
time associated to hyperbolic pre-disks intersecting  $C(B_{t_2+P}^{t_1}(x))$ and
$C(B_{t_2+P}^{t_2}(y))$, 
 by the second assertion of
Proposition~\ref{l.contraction} we obtain for $R_1=R(C(B^{t_1}_{t_2+P}(x)))$
\begin{equation*}
    \dist_{f^{R_1}(D)}\left(f^{R_1}(z), f^{R_1}(C(B^{t_1}_{t_2+P}(x)))\right)\leq
    2\delta'_1\sigma^{\frac{t_2+P-R_1}{2}};
\end{equation*}
and also
\begin{equation*}
    \dist_{f^{R_1}(D)}\left(f^{R_1}(z),f^{R_1}(C(B^{t_2}_{t_2+P}(y)))\right)\leq 2\delta'_1\sigma^{\frac{t_2+P-R_1}{2}}.
\end{equation*}
Hence,\begin{equation*}
        \dist_{f^{R_1}(D)}\left(f^{R_1}(C(B^{t_1}_{t_2+P}(x))),f^{R_1}(C(B^{t_2}_{t_2+P}(y)))\right)
        \leq 4\delta'_1\sigma^{\frac{t_2+P-R_1}{2}}.
      \end{equation*}
Recall that, by definition,  $R_1=t_1+m_1$ for some $0\le m_1\le N_0$, and so
$$\frac{t_2+P-R_1}{2}\ge \frac{t_2-t_1+P-N_0}{2}.
$$
Thus, taking $P$ large enough such that
$4\delta'_1\sigma^{P/2}<\delta_0\sigma^{N_0/2}$, we have
$$
\dist_{f^{R_1}(D)}\left(f^{R_1}(C(B^{t_1}_{t_2+P}(x))),f^{R_1}(C(B^{t_2}_{t_2+P}(y)))\right)\leq
\delta_0\sigma^{\frac{t_2-t_1}{2}}
$$
which means $C(B^{t_2}_{t_2+P}(y))\subset
A_{t_2}(C(B^{t_1}_{t_1+P}(x)))$. This gives a contradiction.
\end{proof}

 \section{Tail of recurrence times}
\label{s.tail esti}

Though our construction of the objects  in the previous section  is  significantly  different from \cite{G}, our approach on the estimates below is inspired in  \cite[Section 3.2]{G}.
Our goal in this section is to prove that if the Lebesgue measure of $\{\mathcal E>n\}$ decays (stretched) exponentially fast, then the tail of the recurrence times also decays (stretched) exponentially fast. 
More precisely, we shall see that given a local unstable disk $D\subset K$ and constants $c>0$ and $0<\tau\le 1$, there is $d>0$ such that
\begin{equation}\label{eq.EimpR}
\leb_D\{\mathcal E> n\} =\mathcal O(e^{-cn^{\tau}})\quad\Rightarrow\quad
\leb_D\{R>n\}\leq\mathcal {O}(e^{-dn^{\tau}}).
\end{equation}
First of all
 observe that if there exists $d,\tau>0$ such that
\begin{equation}\label{eq.deltad}
\leb_D(\Delta_n)\leq\mathcal
{O}(e^{-dn^{\tau}}),
\end{equation}
then we have $\mathcal
\{R>n\}\subset\Delta_{n-N_0}$, and so
$$\leb_D\{R>n\}\leq\leb_D(\Delta_{n-N_0})=\mathcal
{O}(e^{-d{(n-N_0)}^{\tau}})=\mathcal {O}(e^{-dn^{\tau}}).$$
Hence, for proving  \eqref{eq.EimpR}, it is enough to see that \eqref{eq.deltad} holds.

By Remark \ref{lastremark}  there exists a constant $\rho >0$ such
that for all $n\in\mathbb N$
\begin{equation}\label{bdy2}
\leb_D\{x\in D \mid
\dist_D(x,\partial\Delta_0)\leq 2\delta_0\sigma^{{\theta n}/{4}}\}\leq
\rho\sigma^{{\theta n}/{4}},
\end{equation}
 where $\theta$ is given in
Proposition~\ref{l:hyperbolic2}.
Take $x\in\Delta_n$ and assume it belongs neither to $\{\mathcal
E>n\}$ nor to
$\{x\,|\,\dist_D(x,\partial\Delta_0)\leq 2\delta_0\sigma^{{\theta n}/{4}}\}$.
Since $n\ge \mathcal E(x)$, by Proposition \ref{l:hyperbolic2} the point $x$ has at least $[\theta n]$
hyperbolic times between $1$ and $n$, and so it has at least
$[{{\theta}n}/{2}]$ hyperbolic times between ${{\theta}n}/{2}$ and~$n$. Ordering  them as ${\theta n}/{2} \le t_1<\dots< t_k\le n$, then $x\in H_{t_i}\cap\Delta_0$ for $1\le i\le k$.
From   \eqref{Hdel_n} we know that
\begin{equation*}
 H_{t_i}\cap\Delta_0\subset {S}_{t_i}\cup
\bigcup_{j=n_0}^{t_i}\bigcup_{\omega\in\Omega_j}\omega, \quad \text{for } 1\le i\le k.
\end{equation*}
If $x\notin S_{t_i}$, then $x\in\cup_{j=n_0}^{t_i}\Omega_j$ and this means that
$x\notin\Delta_n$, which gives a contradiction. Hence $x\in S_{t_i}$.
As $x\in\{x\in\Delta_0\,|\,\dist_D(x,\partial\Delta_0)> 2\delta_0\sigma^{{{\theta}n}/{4}}\}$, we
have
\begin{equation*}
x\in H_{t_i}\cap \{x\in\Delta_0\,\mid\,
\dist_D(x,\partial\Delta_0)> 2\delta_0\sigma^{t_i/2} \},\quad \text{for } 1\le i\le k.
\end{equation*}
Recalling Remark \ref{lastremark}, we obtain
$x\notin S_{t_i}(\Delta_0^c)$, and so $$x\in S_{t_i}(\Delta_0),\quad \text{for } i=1,\dots,k.$$ 
Thus, we have seen that $x$ belongs to the set
$X\left([{\theta}n/{2}],n\right)$,
where we define
\begin{equation*}
X(k,N)= \left\{x\,\mid\,\exists t_1<\ldots<t_k\leq n\textrm{ such that  }x\in
\bigcap_{i=1}^k{S_{t_i}(\Delta_0)}\right\} 
\end{equation*}
for integers $k$  and $N$. 
Hence
$$\Delta_n \subset \{x\in \Delta_0 \mid \mathcal E(x)>n\}\cup \left\{x\in\Delta_0\mid \dist_D(x,\partial\Delta_0)
\leq 2\delta_0\sigma^{\frac{{\theta}n}{4}}\right\}\cup X({[\theta n}/2],n).$$
Since the middle set in the union above has exponentially small measure by \eqref{bdy2}, it remains to see that the measure of
$X([{\theta n}/2],n)$ decays exponentially fast in $n$. 
This follows from the next result, whose proof will be given in the remaining of this section.

\begin{Proposition}\label{X}
There exist
 $D_0>0$ and $0<\lambda_0<1$ such that for all $k$ and $N$
 $$\leb_D(X(k,N))\leq D_0\lambda_0^k\leb_D(\Delta_0).$$
\end{Proposition}

We start by fixing some integer $P'\ge P$ (recall 
Lemma~\ref{l.P}) whose value will be made precise later.
 Given $x\in X(k,N)$, consider all the instants $u_1,\dots,u_p$ for which $x$
belongs to some $S_{u_i+n_i}(\omega_{u_i}^{x_i})$ with $n_i\geq P'$,
ordered so that $u_1<\ldots<u_p$. Defining for $B_1=B^{u_1}_{u_1}(x_1)$
\begin{equation}\label{eq.Yi}
Y(n_1,\ldots,n_p,B_1) = \left\{x\,\mid \,\exists\,t_1<\ldots<t_p   \text{ and $x_2,\ldots,x_p$
 s.t. $x\in
\bigcap_{i=1}^{p}S_{t_i+n_i}\!\!\left(\omega_{t_i}^{x_i}\right)$}\right\},
\end{equation}
we then have $x\in
Y(n_1,\ldots,n_p,B_1)$. 
Assume that $\sum_{i=1}^{p}{n_i}< k/2$. As we are considering $n_1,\ldots,n_k\ge P'$, we must have $p<k/(2P')$. Let $v_1<\ldots<v_q$ be the other instants for which
$x\in S_{v_i+m_i}(\omega_{v_i}^{z_i})$, for times
$m_1,\ldots,m_q<P'$. As  $p+q= k$, for $P'>1$ we have
$$q\geq
\frac{(2P'-1)k}{2P'}\geq\frac{k}{2P'}\ge \left[\frac{k}{2P'}\right].$$  
Thus, considering 
$$
Z(q,N) = \bigg\{ x\in\Delta_N\mid \exists t_1<\ldots<t_q\leq N\text{ and $m_1,\dots,m_q<P'$ s.t. }
 x\in
\bigcap_{i=1}^qS_{t_i+m_i}(\Omega_{t_i})  \bigg\},
$$
we have shown that assuming $\sum_{i=1}^{p}{n_i}< k/2$ we necessarily have $x\in Z\left(\left[{k}/{(2P')}\right],N\right)$. Hence,
\begin{equation}\label{Z12}
X(k,N)\,\,\subset\,\,
\bigcup_{B_1}\bigcup_{n_1,\ldots,n_p\geq
P',\atop\sum{n_i}\geq\frac{k}{2}}Y(n_1,\ldots,n_p,B_1)\cup{Z\left(\left[\frac{k}{2P'}\right],N\right)},
\end{equation}
where the first union is taken over all possible sets $B_1=B^{t_1}_{n_1}(x_1)$.

Our goal now is to obtain estimates on the  $\leb_D$ measure of the sets involved in~\eqref{Z12}. We start with  $Z(k,N)$.
For the sake of notational simplicity, in  the sequel we shall denote for $i\ge0$
$$B_i=B_{t_i+m_i}^{t_i}(x_i)\qand B_i'=B_{t_i'+m_i'}^{t_i'}(x_i').$$ 
We introduce below auxiliary sets $Z_1(k,B_0)$ and $Z_2(k,N)$ which will be useful to estimate the $\leb_D$ measure of $Z(k,N)$.

Given $E\in \NN$, define for a positive integer $k$
 \begin{align*}
 Z_{1}(k,B_{0}) = \bigg\{x\,|\,
\exists B'_{1},B_{1},\ldots, B'_{r},B_{r}\, &\textrm{ so that $B_{i} \nsubseteq B'_{i}$ and   $ t_{i-1} \leq
t'_{i} \leq t_{i}-E$,    $\forall 1
\leq i \leq r$, }\\
 &
 \sum_{i=1}^{r}\left[\frac{t_{i}-t'_{i}}{E}\right] \geq k
\text{ and } \, x \in \bigcap_{i=0}^{r}B_{i}\cap
\bigcap_{i=1}^{r}B'_{i}\,\bigg\}.
\end{align*}
\begin{Lemma}\label{Z_1}
There is $D_1>0$ (independent of $E$) such that for all $k$ and $B_0$
$$
\leb_D(Z_{1}(k,B_0))\leq D_1(D_1 \sigma^{E/2})^{k}\leb_D(C(B_{0})).
$$
\end{Lemma}

\begin{proof}
We shall prove the result by induction on $k\ge 0$.
For $k=0$, we have by Corollary~\ref{l.B} 
\begin{equation*}\label{eq.z1}
 \leb_D(Z_{1}(0,B_0))\le \leb_D(B_0)\leq (C_5+1)\leb_D(C(B_0)),
\end{equation*}
In this case it is enough to take
\begin{equation}
D_1\ge C_5+1
\end{equation}
For $k\geq 1$, we have
\begin{eqnarray*}
Z_{1}(k,B_0)\subset \bigcup_{t=1}^{k}\bigcup_{B'_1\cap B_0\neq
\emptyset}\bigcup_{B'_1\cap B_1\neq \emptyset,B_1\nsubseteq
B'_1,\atop \left[\frac{t_{1}-t'_{1}}{E}\right] \geq t}
Z_1(k-t,B_1).\end{eqnarray*}
Let $n=t_1-t'_1$. Fix some $B'_1$, and take one from all the possible $B_1$'s intersecting $B_1'$. Setting $p=t'_{1}$ and $Q'_{1}=f^p(B'_{1})$, we have
\begin{eqnarray}\label{eq.Z_1}
f^p(B_1)\subset \mathcal
C:=\left\{y\,|\,\dist_{f^p(D)}(y,\partial{Q'_{1}})\leq
6{\delta'_1}{\sigma^{{n}/{2}}}\right\}.
\end{eqnarray}
Indeed, as $B_1$ contains a point of $\partial{B'_1}$, then $f^p(B_{1})$
contains a point of $\partial{Q'_1}$. We obtain
$$\diam_{f^p(D)}{f^p(B_{1})}\leq
{{\sigma^{{n}/{2}}}{\diam_{f^{p+n}(D)}{f^{p+n}(B_{1})}}}\leq
{6{\delta'_1}{\sigma^{{n}/{2}}}},$$ and so  we get \eqref{eq.Z_1}.
Similarly to \eqref{bdy2}, we have for some uniform constant $\rho_1>0$
\begin{equation}\label{eq.medida}
\leb_{f^p(D)}(\mathcal C)\leq \rho_1
\sigma^{n/2}\leb_{f^p(D)}(Q'_1).
\end{equation}
By Remark~\ref{re.cores} the cores $C(B_1)$ of all those possible $B_1$'s  are pairwise disjoint.  Moreover, by Lemma~\ref{l.in V_n} these cores $C(B_1)$ must
be all contained in $V_p(x'_1)$, where $x_1'$ is the point such that $C(B'_1)=\omega^{x'_1}_{t'_1}$.  
Then, using \eqref{eq.Z_1}, \eqref{eq.medida}, Corollary~\ref{l.B} and the bounded distortion  we obtain
\begin{equation}\label{item1}
\sum_{B'_1\cap B_1\neq \emptyset,B_1\nsubseteq
B'_1,\atop \left[\frac{t_1-t'_1}{E}\right]
\geq t}\leb_D(C(B_1)) \leq
{C_2}(C_5+1)\rho_1\sigma^{{Et}/{2}}\leb_D(C(B'_1)).
\end{equation}

Let now $q=t_0$ and $C(B_0)=\omega_{q,m}^{x}$. Once more by Remark~\ref{re.cores} and Lemma~\ref{l.in V_n}, the
possible sets $C(B'_1)$'s are pairwise disjoint and are all
included in $V_q(x)$. 
Moreover, as $f^q(V_q(x))=B^{cu}(f^q(x), \delta_1)$ and $f^{q+m}(C(B_0))$ is a $cu$-disk of radius $\delta_0>0$, there is some uniform constant $\rho_2>0$ such that
$$\leb_{f^q(D)}(B^{cu}(f^q(x),\delta_1))\leq
\rho_2\leb_{{f^q(D)}}(f^q(C(B_0)));$$ 
recall that $0\le m\le N_0$.
Using bounded distortion we get
\begin{equation}\label{eq.bd} \sum_{B'_1\cap B_0\neq
\emptyset}\leb_D(C(B'_1))\leq \leb_D(V_q(x)) \le \rho_2 C_2\leb_D(C(B_0)).
\end{equation}
Finally, using \eqref{item1}, \eqref{eq.bd} and the inductive hypothesis, we deduce
\begin{eqnarray*}
\leb_D(Z_{1}(k,B_0))&\leq& \sum_{t=1}^{k}\sum_{B'_1\cap B_0 \neq
\emptyset}\sum_{B'_1\cap B_1\neq \emptyset,B_1\nsubseteq
B'_1,\atop\left[\frac{t_1-t'_1}{E}\right]
\geq t}\leb_D(Z_1(k-t,B_1))\\
&\leq& \sum_{t=1}^{k}\sum_{B'_1\cap B_0 \neq
\emptyset}\sum_{B'_1\cap B_1\neq \emptyset,B_1\nsubseteq
B'_1,\atop\left[\frac{t_1-t'_1}{E}\right] \geq
t}D_1(D_1 \sigma^{E/2})^{k-t}\leb_D(C(B_{1}))\\
&\leq& \sum_{t=1}^{k}\sum_{B'_1\cap B_0\neq \emptyset}D_1(D_1
\sigma^{{E}/{2}})^{k-t}C_2 (C_5+1)\rho_1\sigma^{{Et}/{2}}\leb_D(C(B'_1))\\
&\leq& D_1(D_1\sigma^{{E}/{2}})^{k} C_2^2(C_5+1)\rho_1\rho_2
\sum_{t=1}^{k}D_1^{-t}\leb_D(C(B_{0})).
\end{eqnarray*}
Thus, taking  $D_1>0$ large enough so that $$
\frac{C_2^2(C_5+1)\rho_1\rho_2D_1^{-1}}{1-D_1^{-1}}\leq1$$
we finish the proof.
\end{proof}

Now set for positive  integers $k$ and $N$
\begin{equation}\label{eq.ze2}
Z_2(k,N) = \left\{ x\in\Delta_N\, |\,\exists B_1\varsupsetneq B_2
\ldots\varsupsetneq B_k\text{ with $t_1<\dots< t_k\leq N$ and $x\in
\bigcap_{i=1}^kB_i$} \right\}.
\end{equation}

\begin{Lemma}\label{Z_2}
There exists $\lambda_2<1$ such that for all $N\ge 1$ and $1\le k\le
N$
$$\leb_D(Z_2(k,N))\leq \lambda_2^k\leb_D(\Delta_0).$$
\end{Lemma}

\begin{proof}
We assume $N$ is fixed in this proof and simply write $Z_2(k)=Z_2(k,N)$. We shall prove
that the conclusion of the lemma holds with $\lambda_2=\frac{D_1}{D_1+1}$.
Using that $C_5+1\le D_1$, by Corollary~\ref{l.B}  we have for each possible $ B$ 
\begin{equation}\label{eq.Z_2} \leb_D(B) \leq D_1\leb_D(C(B)).
\end{equation}
We define $\mathcal {Q}_1$ as the class of sets $B$ with $t(B)\leq
N$ and  not contained in any other $B'$s. Consider
$\mathcal{Q}_2$ as the class of sets $B\notin \mathcal Q_1$ with
$t(B)\leq N$ which are included  in elements of~$\mathcal{Q}_1$ and not contained in any other $B$'s.
 We proceed inductively. Notice that this process
must stop in a finite number of steps because we always take
$t(B)\le N$. We say that an element in $\mathcal{Q}_i$ has
\emph{rank}~$i$.

Let now $$G_k=\bigcup_{i=1}^{k}\bigcup_{B\in \mathcal Q_k} C(B),$$
and
$$\tilde{Z}_2(k)=\left(\bigcup_{B\in{\mathcal{Q}_k}}B\right)\setminus G_k.$$
Now we prove that $Z_2(k)\subset\tilde Z_2(k)$. Given $x\in
Z_2(k)$, we have $x\in B_1 \cap\ldots\cap B_k\cap\Delta_N$ with
$B_1\supsetneq B_2 \ldots\supsetneq B_k$ and $t(B_k)\leq N$. We
clearly have that $B_k$ is of rank $r\geq k$. Take $B'_1\supsetneq
B'_2\ldots\supsetneq B'_{r-1}\supsetneq B'_r$ a sequence with $B'_i
\in \mathcal Q_i$ and $B'_r=B_k$. In particular, $x\in B'_i$ for
$i=1,\dots,k$, and so $x\in \bigcup_{B\in\mathcal Q_k}B$. On the
other hand, since $x\in\Delta_N$ and $G_k \cap\Delta_N=\emptyset$,
we get $x \notin G_k$. So $x\in \tilde{Z}_2(k)$.

Now we deduce the relation between $\leb_D(\tilde{Z}_2(k+1))$ and
$\leb_D(\tilde{Z}_2(k))$, in such a way that we may estimate
$\leb_D(\tilde{Z}_2(k))$. Take $B \in \mathcal Q_{k+1}$. Let $B'$ be
an element of rank $k$ containing $B$. As the cores are pairwise
disjoint by nature, $C(B)\cap G_k=\emptyset$. We obtain $C(B)\subset
B'\setminus G_k\subset \tilde{Z}_2(k)$. By definition $C(B)\subset
G_{k+1}$, thus $C(B)\cap
\tilde{Z}_2(k+1)=\emptyset$. This means that $C(B)\subset
\tilde{Z}_2(k)\setminus \tilde{Z}_2(k+1)$. Finally, by
\eqref{eq.Z_2},
\begin{eqnarray*}
  \leb_D(\tilde{Z}_2(k+1)) &\leq &\sum_{B\in
\mathcal{Q}_{k+1}}\leb_D(B)\\ & \le &D_1\sum_{B\in\mathcal{Q}_{k+1}}\leb_D(C(B))\\
                            & \le &D_1\leb_D(\tilde{Z}_2(k)\setminus \tilde{Z}_2(k+1))
\end{eqnarray*}
since the $C(B)$ are pairwise disjoint; recall Remark~\ref{re.cores}. Then, we obtain
\begin{eqnarray*}
  (D_1+1)\leb_D(\tilde{Z}_2(k+1)) &\le& D_1\leb_D(\tilde{Z}_2(k+1))+D_1\leb_D(\tilde{Z}_2(k)\setminus \tilde{Z}_2(k+1)) \\
    &=& D_1\leb_D(\tilde{Z}_2(k)).
\end{eqnarray*}
It yields $\leb_D(\tilde{Z}_2(k))\leq
\left(\frac{D_1}{D_1+1}\right)^k\leb_D(\Delta_0)$ by induction. Since $Z_2(k)\subset \tilde{Z}_2(k)$, the same
inequality holds for $Z_2(k)$. 
\end{proof}

The previous two lemmas  are enough for us to establish the desired estimate on the $\leb_D$ measure of $Z(k,N)$ in the next lemma. The idea is to  divide $N$ into blocks of length $E$ and choosing properly certain instants from each block. Depending on the number of such instants, we shall apply either Lemma~\ref{Z_1} or Lemma~\ref{Z_2}  to yield the conclusion.

\begin{Lemma}\label{Z}
There are  $D_3>0$ and $0<\lambda_3<1$
such that for all 
 $1\leq k \leq
N$,
$$\leb_D(Z(k,N))\leq D_3{\lambda_3^k}\leb_D(\Delta_0).$$
\end{Lemma}

\begin{proof}
Choose $E$ large enough such that $D_1\sigma^{E/2}<1$; recall $D_1$ in 
Lemma~\ref{Z_1}. We write $N=rE+s$ with $s<E$. Given any
 $x\in Z(k,N)$, choose the instants $t_1<\ldots<t_k$ as
in the definition of $Z(k,N)$. For $0\leq u <r$, take from each
interval $[uE,(u+1)E)$ the first  $t_i \in \{t_1,\dots,t_k\}$ (if there is at least one). Denote that subsequence of $t_i$'s by  $t_{1'}<\dots<t_{k'}$. Since
$t_1<\dots<t_k\le N$, we have $k'\ge [\frac kE]$, which means that
$Ek'+E\geq k$. Keeping only the instants with odd indexes, we get a
sequence of instants $u_1<\ldots<u_\ell$ with $2\ell\geq k'$, and necessarily
$\ell\geq \frac{k-E}{2E}$. Moreover, we have $u_{i+1}-u_i\geq E$ for $1\le
i\le\ell$, by construction.
According to our construction process, we know that associated to the instant $u_i$
there must be some set $B_i$ such that  $x\in B_i$,
for $1\le i\le\ell$.
Define $$I=\{1\leq i\leq\ell, B_i\subset B_1\cap\dots\cap B_{i-1}\}\quad\text{and}\quad J=[1,\ell]\setminus I.$$
Now we split the proof according to the following possible cases:

If $\# I\geq \ell/2$, we keep only the
elements with indexes in $I$. We necessarily have $x\in Z_2(\ell/2,N)$; recall \eqref{eq.ze2}. Then, using  Lemma \ref{Z_2} we see that
$Z_2(\ell/2,N)$ has $\leb_D$ measure exponentially small  in $\ell$ (hence in $k$), which gives the result in this case.

If $\# I\leq \ell/2$, then $\# J \geq \ell/2$. Set
$j_0=\sup J$ and $i_0=\inf\{i<j_0,B_{j_0}\nsubseteq
 B_i\}.$ Next set $j_1=\sup\{j\leq i_0,j\in J\}$ and $i_1=\inf\{i<j_1,B_{j_1}\nsubseteq
 B_i\}$. Proceeding inductively, the process must necessarily stop at some step  $i_n$.
 Then $J\subset\cup_{s=0}^{n}(i_s,j_s]$, by construction. We obtain $\sum_{s=0}^{n}(j_s-i_s)\geq\# J\geq
 \ell/2$, which shows that $$\sum_{s=0}^{n}\left[\frac{t(B_{j_s})-t(B_{i_s})}{E}\right]=\sum_{s=0}^{n}\left[\frac{u_{j_s}-u_{i_s}}{E}\right]\geq
 \ell/2,$$ since $|u_j-u_i|\ge E(j-i)$ by the constructing process. Hence $x \in
Z_1(\ell/2,B_{i_n})$ with the sequence
$B_{i_n},B_{i_n},B_{j_n},\dots,B_{i_0},B_{j_0}$. As the cores of these sets are
pairwise disjoint, we use 
Lemma~\ref{Z_1} and, summing over all the possible $B_{i_n}'$s,   we get   the result also in this case.
\end{proof}

Finally, we deduce an estimate on the $\leb_D$ measure of the sets $Y(n_1,\ldots,n_p,B_1)$ as in~\eqref{eq.Yi}. Recall that $P$ has been introduce in Lemma~\ref{l.P}.

\begin{Lemma}\label{Y}
There is  $D_4>0$ 
such that for all $n_1,\ldots,n_p>P$ and $B_1$
$$\leb_D(Y(n_1,\ldots,n_p,B_1))\leq
D_4(D_4\sigma^{n_1 /2})\cdots(D_4\sigma^{n_p /2})\leb_D (C(B_1)).
$$
\end{Lemma}
\begin{proof}
The proof is by induction on  $p$. Taking
$D_4>{C_5}^{1/2}$ (recall the constant $C_5$ in Proposition~\ref{d.prop.Sn}), we immediately get
the result for $p=1$.
Now suppose $p>1$ and let $x\in Y(n_1,\ldots,n_p,B_1)$. Then there exists
$B_2=B_{t_2}^{t_2}(x_2)$ constructed
 at an instant $t_2>t_1$ such that  $x\in Y(n_2,\ldots,n_p, B_2)$.
 By Lemma~\ref{l.P} we have
$B_{t_2+P}^{t_2}(x_2)\cap B_{t_2+P}^{t_1}(x_1)=\emptyset.$ But for
all $1\leq i \leq s$, we have $x \in
 B_{t_i+n_i}^{t_i}(x_i).$
So, $t_1+n_1< t_2+P,$ i.e. $t_2-t_1>n_1-P$. By the uniform expansion at hyperbolic times,
we get
$$\diam_{f^{t_1}(D)}(f^{t_1}(B_2))\leq \sigma^{\frac{t_2-t_1}{2}}
\diam_{f^{t_2}(D)}(f^{t_2}(B_2))\leq
6\delta'_1\sigma^{\frac{n_1-P}{2}}.
$$
On the other hand, setting
$Q=f^{t_1}(C(B_1))$, we have $\dist_{f^{t_1}(D)}(f^{t_1}(x),\partial Q)\leq
2\delta'_1\sigma^{\frac{n_1}{2}}$ when $x\in
B_{t_1+n_1}^{t_1}(x_1)\cap B_2$.
Thus, there is some constant $D_5>0$ such that
$$f^{t_1}(B_2)\subset \mathcal{C}:=\left\{y\,\mid \, \dist_{f^{t_1}(D)}(y,\partial Q)\leq
D_5\sigma^\frac{n_1}{2}\right\}.$$ By the induction hypothesis we have
 \begin{equation*}
 \leb_{D} (Y(n_2,\ldots,n_p,B_2))
  \leq D_4(D_4\sigma^{n_2 /2})\ldots(D_4\sigma^{n_p
  /2})\leb_{D}(C(B_2)),
 \end{equation*}
which together with bounded distortion yields
 \begin{equation*}
 \leb_{f^{t_1}(D)} (f^{t_1}(Y(n_2,\ldots,n_p,B_2)))
  \leq C_2D_4(D_4\sigma^{n_2 /2})\ldots(D_4\sigma^{n_p
  /2})\leb_{f^{t_1}(D)}(f^{t_1}(C(B_2))).
 \end{equation*}
By Remark~\ref{re.cores} and Lemma~\ref{l.in V_n}, the possible cores $C(B_2)$'s are pairwise disjoint  and all contained in
 $V_{t_1}(x_1)$.
So, the sets $f^{t_1}(C(B_2))$ are still pairwise disjoint and all contained in the annulus $\mathcal {C}$. Hence
 \begin{align*}
 \leb_{f^{t_1}(D)} (f^{t_1}(Y(n_1,\ldots,n_p,B_1))) &\leq
 \sum_{B_2}\leb_{f^{t_1}(D)}(f^{t_1}(Y(n_2,\ldots,n_p,B_2)))\\
&\leq {C_2}{D_4}(D_4\sigma^{n_2 /2})\dots(D_4\sigma^{n_p
/2})\sum_{B_2}\leb_{f^{t_1}(D)}
 (f^{t_1}(C(B_2)))\\
 &\leq {C_2}{D_4}(D_4\sigma^{n_2 /2})\dots(D_4\sigma^{n_p /2})\leb_{f^{t_1}(D)}(\mathcal
 {C}).
 \end{align*}
Similarly to \eqref{bdy2}, we have for some $\rho>0$
$$\leb_{f^{t_1}(D)}(\mathcal{C})\leq
 \rho\sigma^{n_1 /2}\leb_{f^{t_1}(D)}(Q),$$ where $Q=f^{t_1}(C(B_1))$.
 Then,
$$\leb_{f^{t_1}(D)}(f^{t_1}(Y(n_1,\dots,n_p,B_1)))\leq C_2\rho\sigma^{n_1 /2}(D_4\sigma^{n_2 /2})\ldots(D_4\sigma^{n_p /2})\leb_{f^{t_1}(D)}
 (Q),$$
 which together with bounded distortion yields
 $$\leb_D(Y(n_1,\dots,n_p,B_1))\leq C_2^2\rho(D_4\sigma^{n_1 /2})(D_4\sigma^{n_2 /2})\dots(D_4\sigma^{n_p /2})\leb_D
 (C(B_1)).$$
Taking $D_4\geq C_2^2\rho$, we finish the proof.
\end{proof}

Now we are ready to complete the proof of  Proposition~\ref{X}.
Take $P'\geq P$ (recall $P$ in Lemma~\ref{l.P}) so that
\begin{equation}\label{eq.ineq}
\sigma^{1/2}+D_4\sigma^{P'/2}<1.
\end{equation}
%
As shown in~\eqref{Z12}, we have
$$X(k,N) \subset
\bigcup_{B_1}\bigcup_{n_1,\ldots,n_p\geq
P',\atop\sum{n_i}\geq\frac{k}{2}}Y(n_1,\ldots,n_p,B_1)\cup{Z\left(\left[\frac{k}{2P'}\right],N\right)}.
$$
It follows from Lemma~\ref{Z} and Lemma~\ref{Y} that
$$\leb_D(X(k,N))\leq \sum_{B_1}\sum_{n_1,\ldots,n_p\geq
P',\atop\sum{n_i}\geq\frac{k}{2}} D_4(D_4\sigma^{n_1
/2})\ldots(D_4\sigma^{n_p
/2})\leb_D(C(B_1))+D_3\lambda_3^{\frac{k}{2P'}}\leb_D(\Delta_0).
$$
We have $\sum_{B_1}\leb_D(C(B_1))\leq \leb_D(\Delta_0)<\infty$, because the
cores $C(B_1)$ are pairwise disjoint. We are left to show that the sum
$$\sum_{n_1,\ldots,n_p\geq
P',\atop\sum{n_i}\geq\frac{k}{2}}(D_4\sigma^{n_1
/2})\ldots(D_4\sigma^{n_p /2})
$$
is exponentially small in $k$.
We use the generating series
$$\sum_n\sum_{n_1,\ldots,n_p\geq
P',\atop\sum{n_i}=n}(D_4\sigma^{n_1 /2})\ldots(D_4\sigma^{n_p
/2})z^n=\sum_{p=1}^{\infty}\left(D_4\sum_{n=P'}^{\infty}\sigma^{n/2}z^n
 \right)^p=\frac{D_4\sigma^{P'/2}z^{P'}}{1-{\sigma^{1/2}}z-D_4\sigma^{P'/2}z^{P'}}.
$$
Under condition \eqref{eq.ineq}, the function above has no  pole in a neighborhood of the unit disk
in~$\mathbb{C}$. Thus, its coefficients decay
exponentially fast: there are constants $D_6>0$ and $\lambda_6<1$
such that
$$\sum_{n_1,\ldots,n_p\geq
P',\atop\sum{n_i}=n}(D_4\sigma^{n_1
/2})\ldots(D_4\sigma^{n_p /2})\leq
D_6\lambda_6^n.
$$
Then we sum over $n\geq k/2$ and $B_1$ to obtain constants $D_0>0$ and $0<\lambda_0<1$ such that
$$\leb_D(X(k,N))\leq D_0\lambda_0^k\leb_D(\Delta_0),$$
which gives Proposition~\ref{X}.

\section{Gibbs-Markov-Young structure}

In this section we construct the GMY structure given by Theorem~\ref{t:Markov towers}.

\subsection{Product structure}\label{s.product}



Consider the center-unstable disk $\Delta_0\subset D$ of Section~\ref{s.partition} and the
($\leb_D$ mod 0) partition $\cp$ of $\Delta_0$. We define
$$\Gamma^s=\left\{ W^s_{\delta_s}(x):\,x\in \Delta_0\right\}$$
and  the family of unstable leaves
 $\Gamma^u$ as the set of all local unstable leaves  $u$-crossing $\mathcal C^0$; recall Definition~\ref{d.ucross} and~\eqref{eq.C0}. Clearly
$\Gamma^u$ is nonempty because $\Delta_0\in \Gamma^u$. We need to see  that $\cup\gamma^u$ is compact. By the domination
property and Ascoli-Arzela Theorem, any limit leaf $\Delta_\infty$
of leaves in $\Gamma^u$ is a center-unstable disk $u$-crossing $\mathcal C^0$. Hence
$\Delta_\infty\in\Gamma^u$, by definition of $\Gamma^u$, and so $\cup\gamma^u$ is compact.

The \emph{s-subsets} are defined in the following way: given $\omega\in \mathcal P$, consider
 $$\cc(\omega)=\bigcup_{x\in\omega}W^s_{\delta_s}(x).$$
The pairwise disjoint $s$-subsets $\Lambda_1,\Lambda_2,\dots$ are
precisely the sets $$\{\cc(\omega)\cap(\cup\gamma^u):  \omega\in\cp\}.$$
Then we should check that $f^{R_i}(\Lambda_i)$ is \emph{u-subset}.
Given an element $\omega\in\cp$, by construction there is some
$R(\omega)\in\NN$ such that $f^{R(\omega)}(\omega)$ is a
center-unstable disk $u$-crossing~$\cc^0$. 
Since by construction
$f^{R(\omega)}(\omega)$ intersects
 $W^s_{\delta_s/4}(p)$, then according to the choice of
 $\delta_0$ and the invariance of the stable foliation, we have
 that each element of $f^{R(\omega)}(\cc(\omega)\cap\Gamma^u)$ must $u$-cross
 $\cc^0$ and is contained in the $\lambda^{R(\omega)}\delta_s$ height
 neighborhood of $f^{R(\omega)}(\omega)$. Ignore the difference caused by the angle. We can say it is contained in $\mathcal C^0$. So, that is a \emph{u-subset}.

In the sequel we prove that the set $\Lambda=(\cup\gamma^u)\cap(\cup\gamma^s)$
 with a \emph{product structure} has indeed a \emph{GMY structure}.
Observe that  $\Lambda$ coincides with the union of the
leaves in $\Gamma^u$ 
and properties $(\bf P_0)$--$(\bf P_2)$ are naturally satisfied. In the next two subsections we prove properties $(\bf P_3)$ and   $(\bf P_4)$. 



\subsection{Uniform expansion and bounded distortion}
Property $(\bf P_3)$(a)  follows from the next result.

 \cle\label{backcontraction} There is $C>0$ such that, given   $\omega\in\cp$  and $\gamma\in\Gamma^u$,
we have for all $1\le k \le R(\omega)$ and all $x,y\in
\cc(\omega)\cap\gamma$
 $$
\dist_{f^{R(\omega)-k}(\cc(\omega)\cap\gamma)}(f^{R(\omega)-k}(x),f^{R(\omega)-k}(y))
\le
C\sigma^{k/2}\dist_{f^{R(\omega)}(\cc(\omega)\cap\gamma)}(f^{R(\omega)}(x),f^{R(\omega)}(y)).
 $$
 \fle

\dem 

Let $\omega$ be an element of the partition $\cp$ constructed  in 
Section \ref{s.partition}. There is necessarily  a point $x \in D$ with
$\sigma$-hyperbolic time $n(\omega)$ satisfying $R(\omega)-N_0\le
n(\omega)\le R(\omega)$.
Since we take $\delta_s,\delta_0<\delta_1/2$, by \eqref{e.delta1}, $n(\omega)$ is  a $\sqrt\sigma$-hyperbolic
time for every point in $\cc(\omega)\cap\gamma$. Recalling \eqref{contra}, we obtain that for all $1\le k \le n(\omega)$ and
all $x,y\in \cc(\omega)\cap\gamma$
 $$
\dist_{f^{n(\omega)-k}(\cc(\omega)\cap\gamma)}(f^{n(\omega)-k}(x),f^{n(\omega)-k}(y))
\le
\sigma^{k/2}\dist_{f^{n(\omega)}(\cc(\omega)\cap\gamma)}(f^{n(\omega)}(x),f^{n(\omega)}(y)).
 $$
Considering $R(\omega)-n(\omega)\le N_0$, we take $C$ depending only on $N_0$ and the  derivative of~$f$, then we get the result. \cqd

Property $(\bf P_3)$(b) follows from Proposition~\ref{c.curvature}
together with Lemma~\ref{backcontraction} as in \cite[Proposition
2.8]{ABV}. We prove it here for the sake completeness.

 \cle
\label{l.boundeddistortion} There is $\bar{C}>0$ such that, for all
$x, y \in \Lambda_i$ with $y\in\gamma^u(x)$, we have
$$
\log\frac{\det D(f^{R_i})^u(x)}{\det D(f^{R_i})^u(y)}\leq \bar{C}
\dist(f^{R_i}(x),f^{R_i}(y))^\zeta.
$$
\fle

\dem For $0\leq k< R_i$ and $y \in \gamma^u(x)\in \Gamma^u$, we
set $J_k(y)=\log |\det Df^u(f^k(y))|$ as in the last item of Proposition~\ref{c.curvature}. Then,
$$
\log\frac{\det D(f^{R_i})^u(x)}{\det D(f^{R_i})^u(y)}=
\sum_{k=0}^{R_i-1}(J_k(x)-J_k(y)) \leq \sum_{k=0}^{R_i-1}L_1
\dist_D(f^k(x), f^k(y))^\zeta.
$$
By Proposition~\ref{c.curvature}, the sum of $\dist_D(f^k(x),
f^k(y))^\zeta$ over $0\leq k \leq R_i$ is bounded by
$$\dist_D(f^{R_i}(x), f^{R_i}(y))^\zeta / (1-\sigma^{\zeta/2}).$$ Take $\bar C= L_1(1-\sigma^{\zeta/2})$ to get the result. \cqd


\subsection{Regularity of the foliations}
\label{sec.regularity}
Property ($\bf P_4$) has already been proved in \cite{AP3}. 
This is standard  for uniformly hyperbolic attractors and the ideas can be adapted to the partially hyperbolic setting. Property
($\bf P_4$)(a) follows from the next result whose proof may be found in \cite[Corollary 3.8]{AP3}.

\cpr\label{pr.produtorio} There are $C>0$ and $0<\beta<1$ such that
for all $y\in\gamma^s(x)$ and $ n\ge 0$
 $$\displaystyle
 \log \prod_{i=n}^\infty\frac{\det Df^u(f^i(x))}{\det Df^u(f^i(y))}\le C\beta^{n}.
 $$
\fpr

For ($\bf P_4$)(b) we need to introduce some useful notion. 

\cd\label{de.abscont}
Given $N$ and $G$ 
submanifolds of~$M$, we say that $\phi: N\to G$ is \emph{absolutely continuous} if it is an
injective map for which there exists $J:N\to\RR$, called the
\emph{Jacobian} of $\phi$, such that
 $$
 \leb_G(\phi(A))=\int_A Jd\leb_N.
 $$
 \fd
 
Property ($\bf P_4$)(b) follows from the next result  whose proof is given in \cite[Proposition~3.9]{AP3}.

\cpr\label{pr.regulstable} Given $\gamma,\gamma'\in\Gamma^u$, define
$\phi\colon\gamma'\to\gamma$  by $\phi(x)=\gamma^s(x)\cap \gamma$.
Then $\phi$ is absolutely continuous and  the  Jacobian  of $\phi$
is given by
        $$
        J(x)=
        \prod_{i=0}^\infty\frac{\det Df^u(f^i(x))}{\det
        Df^u(f^i(\phi(x)))}.$$
\fpr

We have from Proposition~\ref{pr.produtorio} that this
infinite product converges uniformly.

\section{Application}\label{s.appli}

Here we present a open  class of partially hyperbolic diffeomorphisms 
whose center-unstable direction is non-uniformly expanding at Lebesgue almost everywhere in $M$ and, for any center-unstable disk $D$, we have  $\leb_D\{\mathcal E>n\}$ is exponentially small. 
This example was introduced in \cite[Appendix]{ABV} and we sketch  below the main steps of its description.


We consider a linear Anosov diffeomorphism $f_0$ on the $d$-dimensional torus $M=T^d$, $d\ge 2$, with a  hyperbolic splitting $TM=E^u \oplus E^s$. Let $V\subset M$ be some small compact domain, such that for $\pi:\mathbb R^d\to T^d$ the canonical projection, there exist unit open cubes $K^0$, $K^1$ in $\mathbb R^d$ such that $V\subset\pi(K^0)$ and $f_0(V)\subset\pi(K^1)$.  Let $f$ be a diffeomorphism on $T^d$ such that:
\begin{enumerate}
\item $f$ has invariant cone fields $C^{cu}$ and $C^s$ which are with small width $\alpha>0$ and contain, respectively, the unstable bundle $E^u$ and the stable bundle $E^s$ of the Anosov diffeomorphism $f_0$;
\item $f^{cu}$ is \emph{volume expanding everywhere}: there is $\sigma_1>0$ such that  $|\det(Df|T_x D^{cu})|>~\sigma_1$ for any $x\in M$ and any disk $D^{cu}$ through $x$ tangent to the center-unstable cone field $C^{cu}$;
\item $f$ is $C^1$-close to $f_0$ in the compliment of $V$, so that $f^{cu}$ is \emph{expanding outside $V$}: there is $\sigma_2<1$ satisfying $\|(Df|T_x D^{cu})^{-1}\|<\sigma_2$ for $x\in M\setminus V$ and any disks $D^{cu}$ tangent to $C^{cu}$;
\item $f^{cu}$ is \emph{not too contracting} on $V$: there is small $\delta_0>0$ satisfying $\|(Df|T_x D^{cu})^{-1}\|<1+\delta_0$ for any $x\in V$ and any disks $D^{cu}$ tangent to $C^{cu}$.
\end{enumerate}

For example, if $f_1:T^d \to T^d$ is a diffeomorphism satisfying itens (1), (2) and (4) above and coinciding with $f_0$ outside $V$, then any $f$ in a $C^1$ neighborhood of $f_1$ satisfies all the conditions (1)-(4). The $C^1$ open classes of transitive non-Anosov diffeomorphisms given in \cite[Section 6]{BV}, and also other robust examples from \cite{Man}, are constructed in this way and they satisfy: both these diffeomorphisms and their inverse satisfy conditions (1)-(4) above.

Next we show that any $f$ satisfying (1)-(4) is non-uniformly expanding along the $cu$-direction on a full Lebesgue set of points in $M$.
 Let $B_1,\ldots, B_p, B_{p+1}=V$ be any partition of $T^d$ into small subsets such that there exist open cubes $K_i^0$ and $K_i^1$ in $\mathbb R^d$ for which $$B_i\subset \pi(K_i^0) \qand f(B_i)\subset\pi(K_i^1).$$
Let $\mathcal F_0^u$ be the unstable foliation of $f_0$ and 
let us fix any small disk $D$ contained in a leaf of~$\mathcal F_0^u$.
Using the same arguments in the proof of \cite[Lemma A.1]{ABV} we deduce the next result.

\cle\label{l.eg} 
There exist
$\theta>0$ such that the orbit of Lebesgue almost every $x\in D$
spends a fraction $\theta$ of the time in $B_1\cup\ldots\cup B_p$:
\begin{equation*}
\#\{0\leq j<n: f^j(x)\in B_1\cup\ldots\cup B_p\}\geq\theta
n\end{equation*} for every large $n$. 
\fle
Hence, $\leb_{D}$-almost every point $x\in D$ spends a positive fraction $\theta$ of time outside the domain $V$. 
 Then by itens (3) and (4) above, there exists $c_0>0$ such that
$$\limsup_{n\rightarrow +\infty}\frac{1}{n}\sum_{j=0}^{n-1}\log\|(Df \mid E^{cu}_{f^j(x)})^{-1}\|\leq -c_0$$ for $\leb_{D}$-almost every point $x\in D$. Moreover, there exists a constant $c>0$ such that
 $$\leb_{D}\{\mathcal E>n\}=\mathcal O(e^{-cn});$$
see the Claim in~\cite[p.396]{ABV}. Furthermore, as $D$ is an arbitrary $cu$-disk, $f$ is non-uniformly expanding along the $cu$-direction on a full Lebesgue set of points in $M$.

%
%
%

\end{document}